\documentclass[12pt]{article}
\input isolatin1.sty
 \usepackage{a4}
 \usepackage{amsmath}
 \usepackage{amssymb}
\usepackage{amsfonts}
\usepackage{amsthm}
\usepackage{graphicx}
\newtheorem{thm}{Theorem}[section]

\newtheorem{lem}[thm]{Lemma}
\newtheorem{prop}[thm]{Proposition}

\newtheorem{rem}[thm]{Remark}

\renewcommand{\a}{\alpha}
\renewcommand{\b}{\beta}
\newcommand{\la}{\lambda}

\renewcommand{\span}{\operatorname{span}}

\def\tr{\mbox{tr}\,}

\long\def\comment#1{{}}

\title{The smallest eigenvalue of Hankel matrices}

\author{Christian Berg\thanks{The present work was initiated while the first
     author was visiting University of Wroc{\l}aw granted by the HANAP
    project mentioned under the second author. The first author has
    been supported by grant 272-07-0321 from the Danish Research
    Council for Nature and Universe.}
\and Ryszard Szwarc
 \thanks{The second author was supported by European Commission Marie 
Curie Host
Fellowship for the Transfer of Knowledge ``Harmonic Analysis, Nonlinear 
Analysis and
Probability''  MTKD-CT-2004-013389 and by MNiSW Grant N201 054 32/4285.}
}
\begin{document}
\maketitle

\begin{abstract} Let $\mathcal H_N=(s_{n+m}),n,m\le N$ denote the Hankel matrix of
  moments of a positive measure with moments of any order. We study
  the large $N$ behaviour of the smallest eigenvalue $\la_N$ of
  $\mathcal H_N$. It is proved that $\la_N$ has exponential decay to
  zero for any measure with compact support. For general
  determinate moment problems the decay to 0 of $\la_N$ can be
  arbitrarily slow or arbitrarily fast. In the indeterminate case, where $\la_N$ is known
  to be bounded below, we prove that the limit of the $n$'th smallest eigenvalue of
  $\mathcal H_N$ for $N\to\infty$ tends rapidly to infinity with
  $n$. The special case of the Stieltjes-Wigert polynomials is discussed. 
  \end{abstract}
\noindent 
2000 {\em Mathematics Subject Classification}:\\
Primary 15A18; Secondary 42C05 

\noindent
Keywords:  Hankel matrices, orthogonal polynomials.

\section{Introduction}
Let $(s_n)$ be the moment sequence of a  positive measure $\mu$ on
$\mathbb R$ with infinite support,
\begin{equation}\label{eq:HMP}
s_n=\int x^n\,d\mu(x),\quad n\ge 0.
\end{equation}
 By Hamburger's theorem this is
equivalent to a real sequence $(s_n)$ such that all the Hankel
matrices
\begin{equation}\label{eq:Hankel}
\mathcal H_N=(s_{n+m})_{n,m=0}^N,\quad N=0,1,\ldots
\end{equation}
are positive definite. The smallest eigenvalue of $\mathcal H_N$ is the
positive number 
\begin{equation}\label{eq:lambda}
\la_N=\min\{\langle \mathcal H_N a,a\rangle \mid a\in\mathbb
C^{N+1},||a||=1\},
\end{equation}
and clearly $\la_0\ge\la_1\ge\ldots$. The large $N$ behaviour of
$\la_N$  has been studied in the papers
\cite{B:C:I,Ch:La,Ch:Lu,Sz,W:W,Wi}. See also results in \cite{Bec,Lu} about the behaviour
of the condition number $\kappa(\mathcal H_N)=\Lambda_N/\lambda_N$,
where $\Lambda_N$ denotes the largest eigenvalue of $\mathcal H_N$.

Widom and Wilf \cite{W:W} found the asymptotic behaviour
\begin{equation}\label{eq:ww}
\lambda_N\sim AN^{1/2}B^{N},
\end{equation}
for certain constants $A>0,0<B<1$ in the case of a measure $\mu$ of
compact support in
the Szeg\H{o} class, generalizing results by  Szeg\H{o} \cite{Sz}. In
the same paper Szeg\H{o} also obtained results about the Hermite and
Laguerre case, namely
\begin{equation}\label{eq:sz}
\la_N\sim AN^{1/4}B^{N^{1/2}},
\end{equation}
again with certain $A,B$ as above. In all of this paper $a_N\sim b_N$ means
that $a_N/b_N\to 1$ as $N\to\infty$.
 
Chen and Lawrence \cite{Ch:La} found the asymptotic behaviour of
$\la_N$ for the case of $\mu$ having the density $e^{-t^\b}$ with
respect to Lebesgue measure on the interval $[0,\infty[$.
 The result requires
$\b>1/2$, and we refer to \cite{Ch:La} for the quite involved
expression. For $\b=\tfrac12$ the asymptotic behaviour is only
stated as a conjecture:
$$
\la_N\sim A\frac{\sqrt{\log N}}{N^{2/\pi}}
$$
for a certain constant $A>0$.

Chen and Lubinsky \cite{Ch:Lu} found the asymptotic behaviour of
$\la_N$, when $\mu$ is a generalized (symmetric) exponential weight
including $e^{-|x|^\a}$ with $\a>1$.

 We recall that the density $e^{-t^\b}$ on the half-line is determinate for $\b\ge
\tfrac12$, i.e. there are no other measures having the moments
\begin{equation}\label{eq:beta}
s_n=\int_0^\infty t^ne^{-t^\b}\,dt=\Gamma\left(\frac{n+1}{\b}\right)/\b.
\end{equation}
However, for $0<\b<\tfrac12$ the density is Stieltjes indeterminate: There are
infinitely many measures on the half-line with the moments \eqref{eq:beta}.
The symmetric density $e^{-|x|^\a}$ is determinate if and only if
$\a\ge 1$.
 For general
information about the moment problem see \cite{Ak,S:T,Simon}.

 Berg, Chen and Ismail proved in \cite{B:C:I} the general result that the moment sequence \eqref{eq:HMP}
(or the measure $\mu$) is determinate
if and only if $\la_N \to 0$ for $N\to\infty$ and found the positive 
lower bound  $\la_N\ge 1/\rho_0$ in the indeterminate case, where
$\rho_0$ is given in \eqref{eq:rho} below.

The purpose of the present paper is to prove some general results
about the behaviour of $\la_N$.

In section 2  we prove that $\la_N$ tends to zero exponentially
for any measure $\mu$ of compact support. Theorem~\ref{thm:fast} is a
slightly sharpened version, where only the boundedness of the
coefficients $(b_n)$ from the three term recurrence relation \eqref{eq:3t}
is assumed.
We also show that $\la_N$ may tend to zero arbitrarily fast.

Section 3 is devoted to showing that there exist determinate measures
for which $\la_N$ tends to zero arbitrarily slowly, cf. Theorem~\ref{thm:slow}.
 
In Section 4 we consider the indeterminate case, where $\la_N$ is
bounded below by a positive constant. We prove that the $n$'th
smallest eigenvalue $\la_{N,n}$ of \eqref{eq:Hankel} ($n\le N$) has a
lower bound $\la_{\infty,n}=\lim_{N\to\infty}\la_{N,n}$,which tends rapidly to infinity with $n$, cf. Theorem~\ref{thm:lambdaNn}. To describe our
results in detail we need some more notation.  
  
We let $(P_n)$ denote the sequence of orthonormal polynomials with
respect  to $\mu$, uniquely determined by the requirements that $P_n$
is a polynomial of degree $n$ with positive leading coefficient and
the orthonormality condition $\int P_nP_m\,d\mu=\delta_{nm}$.

The orthonormal polynomials satisfy the following three-term
recurrence relation
\begin{equation}\label{eq:3t}
 xP_n(x)=b_nP_{n+1}(x)+a_nP_n(x)+b_{n-1}P_{n-1}(x),
\end{equation}
where $b_n>0$ and $a_n\in\mathbb R$.

We need the coefficients of the orthonormal polynomials $(P_n)$ with respect to $\mu$:
\begin{equation}\label{eq:OP1}
P_n(x)=\sum_{k=0}^nb_{k,n}x^k,
\end{equation}
and consider the infinite upper triangular matrix
\begin{equation}\label{eq:utmB}
\mathcal B=(b_{k,n}),\qquad b_{k,n}=0,\ k>n.
\end{equation}

Let $\mathcal B_N$ denote the $(N+1)\times(N+1)$-matrix
obtained from $\mathcal B$ by assuming $k,n\le N$ and let $\mathcal A^{(N)}=\mathcal B_N\mathcal
B_N^*$.
Defining the kernel polynomial
\begin{equation}\label{eq:kernelpol}
K_N(z,w)=\sum_{n=0}^N
P_n(z)P_n(w)=\sum_{j,k=0}^N\left(\sum_{n=\max(j,k)}^N
  b_{j,n}b_{k,n}\right)z^jw^k,
\end{equation}
we see that  $\mathcal A^{(N)}=(a_{j,k}^{(N)})$ is the $(N+1)\times(N+1)$-matrix of coefficients to $z^jw^k$ in
$K_N(z,w)$.
The following result going back to A.C. Aitken, cf. Collar
\cite{Co}, has been rediscovered several times, see \cite{Be2,Si}.

\begin{thm}\label{thm:ABC} 
$$
\mathcal A^{(N)}=\mathcal H_N^{-1}.
$$
\end{thm}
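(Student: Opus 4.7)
The plan is to exploit orthonormality directly: write the condition $\int P_n P_m\,d\mu=\delta_{nm}$ in matrix form to get a relation between $\mathcal B_N$ and $\mathcal H_N$, then invert.

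First, I would observe that $\mathcal B_N$ is upper triangular with diagonal entries $b_{n,n}>0$ (the positive leading coefficients of the orthonormal polynomials), so $\mathcal B_N$ is invertible. Next, expand
\[
\delta_{nm}=\int P_n(x)P_m(x)\,d\mu(x)=\sum_{j=0}^{n}\sum_{k=0}^{m}b_{j,n}b_{k,m}s_{j+k},
\]
and recognise the right-hand side as the $(n,m)$-entry of $\mathcal B_N^{*}\mathcal H_N\mathcal B_N$ (the entries $b_{k,n}$ are real, so $*$ coincides with transpose). Hence
\[
\mathcal B_N^{*}\mathcal H_N\mathcal B_N=I_{N+1}.
\]

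Now rearrange: since $\mathcal B_N$ is invertible, this identity gives $\mathcal H_N=(\mathcal B_N^{*})^{-1}\mathcal B_N^{-1}=(\mathcal B_N\mathcal B_N^{*})^{-1}$, hence
\[
\mathcal H_N^{-1}=\mathcal B_N\mathcal B_N^{*}=\mathcal A^{(N)}.
\]
Finally, to confirm that this matches the coefficient description of $\mathcal A^{(N)}$ supplied via \eqref{eq:kernelpol}, I would compute directly
\[
(\mathcal B_N\mathcal B_N^{*})_{j,k}=\sum_{n=0}^{N}b_{j,n}b_{k,n}=\sum_{n=\max(j,k)}^{N}b_{j,n}b_{k,n},
\]
where the last equality uses $b_{j,n}=0$ for $j>n$; this is exactly the coefficient of $z^{j}w^{k}$ in $K_N(z,w)$, as required.

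There is essentially no serious obstacle: the argument is a one-line matrix encoding of orthonormality combined with the invertibility of the triangular matrix $\mathcal B_N$. The only point requiring a little care is bookkeeping with the index conventions (columns of $\mathcal B_N$ indexed by the polynomial degree $n$, rows by the monomial degree $k$) so that the product $\mathcal B_N^{*}\mathcal H_N\mathcal B_N$ lines up correctly with the double sum from orthonormality.
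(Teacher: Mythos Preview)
Your proof is correct. Both arguments rest on the orthonormality of the $P_n$, but you organise it slightly differently from the paper. The paper invokes the reproducing property $\int x^kK_N(x,y)\,d\mu(x)=y^k$ for $0\le k\le N$, expands $K_N$ in the monomial basis, and reads off $\sum_{\ell}s_{k+\ell}a^{(N)}_{\ell,j}=\delta_{k,j}$, i.e.\ $\mathcal H_N\mathcal A^{(N)}=I$ directly. You instead factor $\mathcal A^{(N)}=\mathcal B_N\mathcal B_N^{*}$ and encode orthonormality as $\mathcal B_N^{*}\mathcal H_N\mathcal B_N=I$, then use the invertibility of the triangular matrix $\mathcal B_N$. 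Your route makes the Cholesky-type factorisation $\mathcal H_N=(\mathcal B_N\mathcal B_N^{*})^{-1}$ explicit, which is a nice bonus; the paper's route avoids the (trivial) inversion step by landing on $\mathcal H_N\mathcal A^{(N)}=I$ immediately.
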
  

For completeness we give the simple proof of Theorem~\ref{thm:ABC}:

For $0\le k\le N$ we have by the reproducing property
$$
\int x^kK_N(x,y)\,d\mu(x)=y^k.
$$
On the other hand we have
$$
\int x^kK_N(x,y)\,d\mu(x)=
\sum_{j=0}^N(\sum_{\ell=0}^N s_{k+\ell}a^{(N)}_{\ell,j})y^j,
$$
and therefore
$$
\sum_{\ell=0}^N s_{k+\ell}a^{(N)}_{\ell,j}=\delta_{k,j}. \quad\square
$$

The following Lemma is also very simple. The identity matrix is
denoted $I=(\delta_{j,k})$.

\begin{lem}\label{thm:ABCinfty} As infinite matrices we have
$$
\mathcal B(\mathcal B^*\mathcal H)=(\mathcal B^*\mathcal H)\mathcal
B=\mathcal B^*(\mathcal H \mathcal B)=I,
$$
and $\mathcal B^*\mathcal H$ is an upper triangular matrix.
\end{lem}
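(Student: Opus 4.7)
The plan is to reduce all three identities to the orthonormality relation $\int P_nP_m\,d\mu=\delta_{n,m}$ by giving each relevant matrix entry an integral interpretation. Because $\mathcal{B}$ is upper triangular, the sum
$$
(\mathcal{B}^*\mathcal{H})_{n,m}=\sum_{k=0}^n b_{k,n}\,s_{k+m}=\int x^m P_n(x)\,d\mu(x)
$$
is automatically finite, and an analogous computation gives $(\mathcal{H}\mathcal{B})_{n,m}=\int x^n P_m(x)\,d\mu(x)$. Since $P_n$ is orthogonal in $L^2(\mu)$ to every polynomial of degree less than $n$, the first formula shows that $(\mathcal{B}^*\mathcal{H})_{n,m}=0$ whenever $m<n$, so $\mathcal{B}^*\mathcal{H}$ is upper triangular; its diagonal entries are $\int x^n P_n\,d\mu=1/b_{n,n}$, which are nonzero.

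With these representations, the identities $(\mathcal{B}^*\mathcal{H})\mathcal{B}=I$ and $\mathcal{B}^*(\mathcal{H}\mathcal{B})=I$ are one-line consequences of orthonormality:
$$
((\mathcal{B}^*\mathcal{H})\mathcal{B})_{n,m}=\sum_{k=0}^m \Big(\int x^k P_n\,d\mu\Big)b_{k,m}=\int P_n(x)P_m(x)\,d\mu(x)=\delta_{n,m},
$$
and symmetrically for the second product. For the remaining identity $\mathcal{B}(\mathcal{B}^*\mathcal{H})=I$, I would argue via uniqueness of polynomial expansion: since $x^m$ lies in the span of $P_0,\ldots,P_m$, orthonormality forces $x^m=\sum_{k=0}^m (\mathcal{B}^*\mathcal{H})_{k,m}\,P_k(x)$. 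Substituting $P_k(x)=\sum_j b_{j,k}x^j$ and collecting powers of $x$ yields
$$
x^m=\sum_{j=0}^m \Big(\sum_{k=j}^m b_{j,k}(\mathcal{B}^*\mathcal{H})_{k,m}\Big)x^j=\sum_{j=0}^m (\mathcal{B}(\mathcal{B}^*\mathcal{H}))_{j,m}\,x^j,
$$
so comparing coefficients of $1,x,\ldots,x^m$ gives $(\mathcal{B}(\mathcal{B}^*\mathcal{H}))_{j,m}=\delta_{j,m}$; for $j>m$ the defining sum is empty since the supports of $b_{j,\cdot}$ and $(\mathcal{B}^*\mathcal{H})_{\cdot,m}$ are disjoint.

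The only conceptual obstacle is making sense of products of infinite matrices, but this dissolves entirely because of the triangular structure: $\mathcal{B}$ and $\mathcal{B}^*\mathcal{H}$ are upper triangular (finitely supported columns) while $\mathcal{B}^*$ and $\mathcal{H}\mathcal{B}$ turn out to be lower triangular (finitely supported rows), so each entry of every product under consideration is a finite sum over indices sandwiched between the row and column indices. Consequently, associativity and interchange of summation are automatically legal, and no analytic convergence issue ever arises.
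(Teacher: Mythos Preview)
Your proof is correct. The treatment of well-definedness via triangularity, the upper-triangularity of $\mathcal B^*\mathcal H$, and the identities $(\mathcal B^*\mathcal H)\mathcal B=\mathcal B^*(\mathcal H\mathcal B)=I$ all match the paper's argument essentially verbatim.

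The one genuine difference is how you obtain $\mathcal B(\mathcal B^*\mathcal H)=I$. The paper computes the $(l,k)$ entry as $\sum_{n=0}^k\bigl(\sum_{j=0}^k b_{l,j}b_{n,j}\bigr)s_{n+k}$ and recognizes the inner sum as the $(l,n)$ entry of $\mathcal A^{(k)}=\mathcal H_k^{-1}$, invoking Theorem~\ref{thm:ABC}. You instead observe that $(\mathcal B^*\mathcal H)_{\cdot,m}$ gives the coefficients of $x^m$ in the basis $(P_k)$, so applying $\mathcal B$ converts back to the monomial basis and coefficient comparison yields the identity. Your route is slightly more elementary in that it does not rely on Theorem~\ref{thm:ABC}; it is really the statement that the change-of-basis matrices between $\{x^k\}$ and $\{P_k\}$ are mutual inverses, read off directly. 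The paper's route, by contrast, ties the lemma back to the finite Hankel inverses, which is thematically useful later when the authors want $\mathcal A\mathcal H=I$ in the Stieltjes--Wigert section.
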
 

\begin{proof} The matrix products $\mathcal B^*\mathcal H$ and
  $\mathcal H \mathcal B$
  are well-defined because $\mathcal B$ is upper triangular, and we get
$$
(\mathcal B^*\mathcal H)_{j,k}=\sum_{n=0}^j b_{n,j}s_{n+k}=\int
P_j(x)x^k\,d\mu(x),
$$
which is clearly 0 for $j>k$,
so $\mathcal B^*\mathcal H$ is also upper triangular. Therefore,
$\mathcal B(\mathcal B^*\mathcal
H)$ is well-defined and upper triangular. For $l\le k$ we finally get
$$
(\mathcal B(\mathcal B^*\mathcal H))_{l,k}=\sum_{j=0}^k b_{l,j}\sum_{n=0}^j b_{n,j}s_{n+k}=\sum_{n=0}^k\left(\sum_{j=0}^kb_{l,j}b_{n,j}\right)s_{n+k}=\delta_{l,k}
$$
by Theorem~\ref{thm:ABC} with $N=k$.

The relation $(\mathcal B^*\mathcal H)\mathcal B=\mathcal B^*(\mathcal
H \mathcal B)=I$ is an easy
consequence of the orthogonality of $(P_n)$
with respect to $\mu$.
\end{proof}

 We also consider the infinite matrix
\begin{equation}\label{eq:kappa}
\mathcal K=(\kappa_{j,k}),\quad
\kappa_{j,k}=\frac{1}{2\pi}\int_0^{2\pi}P_j(e^{it})P_k(e^{-it})\,dt.
\end{equation}

 It is a
classical fact that the indeterminate case occurs if and only if
\begin{equation}\label{eq:indet}
\sum_{n=0}^\infty |P_n(z)|^2<\infty
\end{equation}
for all $z\in\mathbb C$. It suffices that \eqref{eq:indet} holds for
just one point $z_0\in\mathbb C\setminus\mathbb R$, and in this case
the convergence of  \eqref{eq:indet} is uniform on compact subsets of
the complex plane.

In the indeterminate case we can let $N\to\infty$ in
\eqref{eq:kernelpol} leading to the entire function of two
complex variables
\begin{equation}\label{eq:kernel}
K(z,w)=\sum_{n=0}^\infty P_n(z)P_n(w)=\sum_{j,k=0}^\infty a_{j,k}z^jw^k,
\end{equation}
and we collect the coefficients of the power series as the symmetric matrix
\begin{equation}\label{eq:ajk}
\mathcal A=(a_{j,k}).
\end{equation}

In Proposition~\ref{thm:matrix} we prove that the matrices $\mathcal
A,\mathcal B,\mathcal K$ are of trace class in the indeterminate case and 
$$
\tr(\mathcal A)=\tr(\mathcal K)=\rho_0,
$$
where $\rho_0$ is given by
\begin{equation}\label{eq:rho}
\rho_0=\frac{1}{2\pi}\int_0^{2\pi}K(e^{it},e^{-it})\,dt=
\frac{1}{2\pi}\int_0^{2\pi}\sum_{k=0}^{\infty}\bigl|P_k\left({\rm e}^{it}\right)
\bigr|^2\,dt<\infty.
\end{equation}

In the indeterminate case the infinite Hankel matrix $\mathcal
H=(s_{n+m})$ does not correspond to an operator on $\ell^2$ defined on
$\span\{\delta_n|n\ge 0\}$. In fact,  by Carleman's theorem we
necessarily have $\sum_{n=0}^\infty s_{2n}^{-1/(2n)}<\infty$, hence
$s_{2n}\ge 1$ for $n$ sufficiently large, and therefore
$$
\sum_{m=0}^\infty s_{n+m}^2=\infty\;\mbox{for all}\;n.
$$ 

It is likely that Theorem~\ref{thm:ABC} extends to the indeterminate
case in the sense that $\mathcal A\mathcal H=\mathcal H\mathcal A=I$,
where the infinite series $\sum_l a_{k,l}s_{l+j}$ defining  $\mathcal
A\mathcal H$ and $\mathcal H\mathcal A$ are absolutely convergent. We
have not been able to prove this general statement, but it holds for
the Stieltjes-Wigert case which is treated in Section 5.

The Stieltjes-Wigert polynomials $P_n(x;q)$ are defined in
\eqref{eq:SW2}. They are orthogonal with respect to a log-normal distribution, known to be
indeterminate, and the corresponding moment sequence is
$s_n=q^{-(n+1)^2/2}$. It is known that the modified moment sequence
$(\tilde s_n)$ given by $\tilde s_n=s_n$ for $n\ge 1$ and 
$$
\tilde s_0=s_0-(\sum_{n=0}^\infty P_n(0;q)^2)^{-1}
$$
is determinate, and the corresponding measure $\tilde{\mu}$ is
discrete given by
\begin{equation}\label{eq:index0}
\tilde{\mu}=\sum_{x\in X}c_x\delta_x,
\end{equation}
where $X$ is the zero set of the reproducing kernel $K(0,z)$ defined
in \eqref{eq:kernel} and 
\begin{equation}\label{eq:index1}
c_x=\left(\sum_{k=0}^\infty P_k(x;q)^2\right)^{-1},\quad x\in X.
\end{equation}
The Hankel matrices $\mathcal H=(s_{j+k})$ and $\tilde{\mathcal
  H}=(\tilde s_{j+k})$ agree except for the upper left corner.
 In Theorem~\ref{thm:SW2}   we prove that the smallest
eigenvalue $\tilde\la_N$ of the Hankel matrix $\tilde{\mathcal
H}_N$ tends to zero exponentially (while $\la_N$ is bounded below). We do it by determining the
corresponding orthonormal polynomials $\tilde P_n(x;q)$, see Theorem~\ref{thm:SW3}.

\section{Fast decay}

We start by proving a lemma which is essentially contained in
\cite[§2]{B:C:I}.

\begin{lem}\label{thm:upperbound} For each $z_0\in\mathbb C$ with
  $|z_0|<1$ we have
\begin{equation}\label{eq:upperbound}
\la_N\le\left((1-|z_0|^2)\sum_{n=0}^N |P_n(z_0)|^2\right)^{-1}.
\end{equation}
\end{lem}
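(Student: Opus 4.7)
The plan is to use the variational characterization of $\lambda_N$ together with the Aitken--Collar identity $\mathcal A^{(N)} = \mathcal H_N^{-1}$ from Theorem~\ref{thm:ABC}. Since $\mathcal H_N$ is positive definite, its smallest eigenvalue satisfies
$$
\lambda_N^{-1} = \max_{a\ne 0} \frac{\langle \mathcal A^{(N)} a, a\rangle}{\|a\|^2},
$$
so to get an upper bound on $\lambda_N$ it suffices to exhibit a single test vector $a\in\mathbb C^{N+1}$ for which the Rayleigh quotient on the right is at least $(1-|z_0|^2)\sum_{n=0}^N |P_n(z_0)|^2$.

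The natural choice, motivated by the kernel-polynomial representation \eqref{eq:kernelpol}, is the vector of powers $a_j=\overline{z_0}^{\,j}$ for $0\le j\le N$. The quadratic form then evaluates by direct substitution into the generating identity for $\mathcal A^{(N)}$:
$$
\langle \mathcal A^{(N)} a, a\rangle = \sum_{j,k=0}^N a^{(N)}_{j,k}\,\overline{z_0}^{\,j}\,z_0^{\,k} = K_N(\overline{z_0},z_0) = \sum_{n=0}^N P_n(\overline{z_0})P_n(z_0) = \sum_{n=0}^N |P_n(z_0)|^2,
$$
where in the last step I use that the $P_n$ have real coefficients, so $P_n(\overline{z_0})=\overline{P_n(z_0)}$. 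Meanwhile, summing the geometric series gives
$$
\|a\|^2 = \sum_{j=0}^N |z_0|^{2j} = \frac{1-|z_0|^{2(N+1)}}{1-|z_0|^2} \le \frac{1}{1-|z_0|^2},
$$
which requires $|z_0|<1$, exactly the hypothesis of the lemma.

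Combining these two computations yields
$$
\lambda_N^{-1} \ge \frac{\langle \mathcal A^{(N)} a,a\rangle}{\|a\|^2} \ge (1-|z_0|^2)\sum_{n=0}^N |P_n(z_0)|^2,
$$
which is equivalent to the stated inequality upon taking reciprocals. There is no real obstacle here; the only conceptual point is recognizing that inverting $\mathcal H_N$ via Theorem~\ref{thm:ABC} turns a lower-bound question on $\mathcal H_N$ into an upper-bound question on $\mathcal A^{(N)}$, where the generating function $K_N(z,w)$ supplies a ready-made closed form for the quadratic form on vectors of powers. The $|z_0|<1$ hypothesis enters exactly to keep the norm of the test vector bounded uniformly in $N$.
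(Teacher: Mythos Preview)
Your proof is correct and takes a genuinely different route from the paper's. The paper works directly with $\mathcal H_N$: it rewrites the Rayleigh quotient for a polynomial $p$ as $\int |p|^2\,d\mu$ divided by $\tfrac{1}{2\pi}\int_0^{2\pi}|p(e^{it})|^2\,dt$ (Parseval for the coefficient vector), chooses the test polynomial $p(x)=\sum_{n=0}^N \overline{P_n(z_0)}\,P_n(x)$, and then bounds the denominator from below via Cauchy's integral formula together with the Poisson-kernel identity $\tfrac{1}{2\pi}\int_0^{2\pi}|e^{it}-z_0|^{-2}\,dt=(1-|z_0|^2)^{-1}$. You instead pass to the inverse matrix $\mathcal A^{(N)}=\mathcal H_N^{-1}$ via Theorem~\ref{thm:ABC}, use the vector of powers $(\overline{z_0}^{\,j})$ as test vector, and read off the quadratic form directly from the generating identity \eqref{eq:kernelpol}; the factor $(1-|z_0|^2)^{-1}$ then drops out of a finite geometric sum rather than a contour integral. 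Your approach is more elementary in that it avoids complex analysis entirely, at the price of invoking Theorem~\ref{thm:ABC}; the paper's argument is self-contained with respect to that theorem but needs the Cauchy--Poisson step. The two are in a sense dual: the paper's test vector sits on the $\mathcal H_N$ side and encodes the kernel polynomial $K_N(\cdot,z_0)$, while yours sits on the $\mathcal A^{(N)}$ side and encodes the evaluation functional at $z_0$.
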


\begin{proof} For any $a\in \mathbb C^{N+1}, a\neq 0$ we have by \eqref{eq:lambda}
$$
\la_N\le \frac{\langle \mathcal H_N a,a\rangle}{||a||^2}.
$$
This means that for any non-zero polynomial
$$
p(x)=\sum_{k=0}^N a_kx^k =\sum_{n=0}^N c_nP_n(x)
$$
we have
\begin{equation}\label{eq:up1}
\la_N\le
\frac{\int|p|^2\,d\mu}{\frac{1}{2\pi}\int_0^{2\pi}|p(e^{it})|^2\,dt}.
\end{equation}
Moreover, by Cauchy's integral formula
$$
p(z_0)=\frac{1}{2\pi}\int_0^{2\pi}\frac{p(e^{it})e^{it}}{e^{it}-z_0}\,dt,
$$
hence by Cauchy-Schwarz's inequality
\begin{equation}\label{eq:up2}
|p(z_0)|^2\le
\frac{1}{2\pi}\int_0^{2\pi}|p(e^{it})|^2\,dt\,\frac{1}{2\pi}\int_0^{2\pi}\frac{dt}{|e^{it}-z_0|^2},
\end{equation}
and the last integral equals $(1-|z_0|^2)^{-1}$ by a well-known
property of the Poisson kernel. Combining \eqref{eq:up1} and
\eqref{eq:up2} for the polynomial
$$
p(x)=\sum_{n=0}^N \overline{P_n(z_0)}P_n(x)
$$
leads to
$$
\la_N\le \frac{\sum_{n=0}^N
  |P_n(z_0)|^2}{(1-|z_0|^2)|p(z_0)|^2}=\left((1-|z_0|^2)\sum_{n=0}^N
  |P_n(z_0)|^2\right)^{-1}.
$$
\end{proof}

 \begin{rem} {\rm It follows immediately from
    Lemma~\ref{thm:upperbound} that if $\la_N\ge c>0$ for all $N$,
    then
$$
\sum_{n=0}^\infty |P_n(z_0)|^2<\infty
$$
for all $z_0$ with $|z_0|<1$, hence $(s_n)$ is indeterminate.}
\end{rem} 

 The following theorem proves that $\la_N$ tends to zero exponentially
 in the sense that there is an estimate of the form
\begin{equation}\label{eq:expfast}
\la_N\le A B^N,\quad A>0,0<B<1,
\end{equation}
whenever the measure $\mu$ in \eqref{eq:HMP} has compact support. 
 
\begin{thm}\label{thm:fast} Assume that the sequence $(b_n)$ from
  \eqref{eq:3t} is bounded with $b:=\limsup b_n$.
Then
$$
\limsup \la_N^{1/N}\le \frac{2b^2}{1+2b^2}.
$$
\end{thm}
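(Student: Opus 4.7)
The main tool is Lemma~\ref{thm:upperbound}, which says it suffices to exhibit $z_0$ with $|z_0|<1$ for which $S_N(z_0):=\sum_{n=0}^N|P_n(z_0)|^2$ grows exponentially at the appropriate rate. I would take $z_0=iy$ with $y\in(0,1)$: the purely imaginary choice is convenient because $\im(iy-a_n)=y$ regardless of the real number $a_n$, enabling a clean telescoping argument essentially equivalent to the Christoffel--Darboux identity. Writing $\phi_n=P_n(iy)$ and $S_N=\sum_{n=0}^N|\phi_n|^2$, multiplying the recurrence \eqref{eq:3t} at $x=iy$ by $\overline{\phi_n}$ and taking imaginary parts yields
$$y|\phi_n|^2=\im\bigl(b_n\phi_{n+1}\overline{\phi_n}\bigr)-\im\bigl(b_{n-1}\phi_n\overline{\phi_{n-1}}\bigr),$$
which telescopes (with $\phi_{-1}=0$) to $yS_N=b_N\im(\phi_{N+1}\overline{\phi_N})$.

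From this identity, $|\phi_{N+1}||\phi_N|\ge yS_N/b_N$. Using the trivial bound $|\phi_N|^2\le S_N$, squaring and dividing gives $|\phi_{N+1}|^2\ge (y^2/b_N^2)S_N$, so
$$S_{N+1}\ge \Bigl(1+\frac{y^2}{b_N^2}\Bigr)S_N.$$
Since $\limsup b_n=b$, for every $\epsilon>0$ one has $b_N\le b+\epsilon$ for all $N$ sufficiently large, and iterating gives $S_N\ge C(1+y^2/(b+\epsilon)^2)^N$. Inserting this into Lemma~\ref{thm:upperbound}, then taking $N$-th roots (the factor $1/(1-y^2)$ is $N$-independent and therefore harmless) and letting $\epsilon\to 0$, one obtains
$$\limsup_{N\to\infty}\la_N^{1/N}\le \frac{b^2}{b^2+y^2}.$$
Specializing to $y=1/\sqrt 2$ yields exactly the bound $2b^2/(1+2b^2)$ claimed.

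The delicate step is extracting an exponential lower bound on $S_N$ from the Christoffel--Darboux identity, but the estimate $|\phi_N|^2\le S_N$ makes it almost automatic once the identity is in hand; the rest reduces to checking that the limsup condition on $b_n$ propagates through an iteration. It is worth noting that letting $y\to 1$ would in fact yield the sharper $b^2/(b^2+1)$; the value $y=1/\sqrt 2$ is chosen because it gives the clean constant appearing in the statement.
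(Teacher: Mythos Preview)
Your argument is correct. The telescoping identity $yS_N=b_N\operatorname{Im}(\phi_{N+1}\overline{\phi_N})$ is precisely the Christoffel--Darboux formula at the conjugate pair $(iy,-iy)$, and the step $|\phi_N|^2\le S_N$ cleanly turns it into the multiplicative recursion $S_{N+1}\ge(1+y^2/b_N^2)S_N$; the $\limsup$ hypothesis on $(b_n)$ then feeds through exactly as you say.

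The paper proceeds differently: it also evaluates Lemma~\ref{thm:upperbound} at $z_0=\alpha i$, but instead of bounding the full sum $S_N$ it keeps only the two terms $|P_{N-1}(\alpha i)|^2+|P_N(\alpha i)|^2$ and invokes an external lower bound for this quantity from \cite[Remark~2, p.~148]{lb}, which yields $\limsup\lambda_N^{1/N}\le 2b^2/(\alpha^2+2b^2)$; letting $\alpha\to 1$ then gives the stated constant. Your route is more elementary in that it is self-contained (no appeal to \cite{lb}), and---as you already observed---it is genuinely sharper: letting $y\uparrow 1$ gives $\limsup\lambda_N^{1/N}\le b^2/(b^2+1)$, strictly smaller than the paper's $2b^2/(2b^2+1)$. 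The factor $2$ in the paper's bound is an artifact of the cited two-term estimate, and your use of the entire sum $S_N$ avoids it. The only reason to stop at $y=1/\sqrt{2}$ is to match the statement as written; your final remark makes clear that this is not optimal.
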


\begin{rem} {\rm Notice that the condition $\limsup b_n < \infty$
    implies that $\sum 1/b_n=\infty$, so by Carleman's theorem the
    moment problem is determinate, cf. \cite[p.24]{Ak}. We also recall the fact that $\mu$
    has compact support if and only if $(a_n),(b_n)$ from \eqref{eq:3t} are bounded sequences.}
\end{rem}

\begin{proof} Taking  $z_0=\alpha i$, where $0<\alpha< 1$, we obtain
  from Lemma~\ref{thm:upperbound}
$$ \la_N \le \left((1-\alpha^2)\sum_{n=0}^N |P_n({\alpha
    i})|^2 \right)^{-1}\le \left((1-\alpha^2)[|P_{N-1}(\alpha i)|^2+|P_N(\alpha
  i)|^2]\right)^{-1}.
 $$
Since the distance from the point $\a i $ to the support of the
orthogonality measure is at least $\a$, we obtain
 by \cite[Remark 2, p. 148]{lb}   
$$
\limsup\lambda_N^{1/N}\le
\frac{1}{1+\frac{\a^2}{2b^2}}=\frac{2b^2}{\a^2+2b^2}.
$$
As $\a$ is an arbitrary number less than $1$ we get
$$
\limsup\lambda_N^{1/N}\le  \frac{2b^2}{1+2b^2}.
$$
\end{proof}

\begin{thm}\label{thm:fast1} For any decreasing sequence $(\tau_n)$ of
  positive numbers
  with $\tau_0=1$ and $\lim\tau_n=0$, there exist determinate
  probability measures $\mu$ for
  which $\la_N\le \tau_N$ for all $N$.
\end{thm}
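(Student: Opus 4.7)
The plan is to exhibit a single explicit discrete probability measure $\mu$ whose smallest Hankel eigenvalues satisfy $\lambda_N\le\tau_N$ for all $N$, by engineering the weights so that a well-chosen test polynomial of degree $N$ forces the bound via the Rayleigh-quotient characterization of $\lambda_N$, and then checking determinacy through Carleman's criterion.

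Concretely, I would place atoms at every nonnegative integer together with one extra atom at $-1$ using weights
\[
c_n:=\frac{\tau_{n+1}-\tau_{n+2}}{[(n+1)!]^2}\quad(n\ge 0),\qquad c_{-1}:=1-\sum_{n\ge 0}c_n.
\]
Monotonicity of $(\tau_n)$ makes each $c_n\ge 0$, and since $\sum_{n\ge 0}c_n\le\sum_{n\ge 0}(\tau_{n+1}-\tau_{n+2})=\tau_1\le 1$, one has $c_{-1}\ge 0$. Thus $\mu:=c_{-1}\delta_{-1}+\sum_{n\ge 0}c_n\delta_n$ is a probability measure, and its support is automatically infinite because $\tau_n\to 0$ forces infinitely many drops $\tau_{n+1}>\tau_{n+2}$.

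For the upper bound, given $N\ge 1$ I would test against the monic degree-$N$ polynomial
\[
p_N(x):=(x+1)x(x-1)\cdots(x-N+2),
\]
which vanishes at $-1,0,1,\ldots,N-2$ and takes the value $(n+1)!/(n-N+1)!$ at every integer $n\ge N-1$. Its coefficient vector $a\in\mathbb C^{N+1}$ has leading entry $1$, so $\|a\|^2\ge 1$, and
\[
\lambda_N\le\frac{\langle\mathcal H_N a,a\rangle}{\|a\|^2}\le\int p_N^2\,d\mu=\sum_{n\ge N-1}\frac{\tau_{n+1}-\tau_{n+2}}{[(n-N+1)!]^2}\le\sum_{n\ge N-1}(\tau_{n+1}-\tau_{n+2})=\tau_N
\]
by telescoping. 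The case $N=0$ is immediate from $\lambda_0=s_0=1=\tau_0$.

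The main remaining obstacle is the verification that $\mu$ is determinate. Using $c_k\le 1/(k!)^2$ one gets $s_{2n}\le 1+\sum_{k\ge 1}k^{2n}/(k!)^2$. A Stirling analysis, locating the maximum of $k^{2n}/(k!)^2$ near $k\sim n/\log n$, yields $s_{2n}^{1/(2n)}=O(n/\log n)$, so $\sum_n s_{2n}^{-1/(2n)}=\infty$ and Carleman's criterion applies. This asymptotic is standard but is the only nontrivial analytic point beyond the purely algebraic construction.
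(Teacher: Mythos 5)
Your construction is correct, and it is genuinely different from the one in the paper. The paper builds \emph{symmetric} measures through the three-term recurrence: it sets $b_0$ to get $\la_1=\tau_1$, then chooses the ratios $r_k=b_{2k-1}/b_{2k-2}$ small enough that $|P_{2N}(0)|^{-2}\le\tau_{2N+1}$, invokes Lemma~\ref{thm:upperbound} at $z_0=0$, and deduces determinacy from $|P_{2n}(0)|\ge 1$ (which rules out $\sum|P_n(0)|^2<\infty$). You instead write down a single explicit discrete probability measure $\mu=c_{-1}\delta_{-1}+\sum_{n\ge0}c_n\delta_n$ with $c_n=(\tau_{n+1}-\tau_{n+2})/[(n+1)!]^2$ and bound $\la_N$ via the Rayleigh quotient applied to the monic polynomial $p_N$ vanishing at the first $N$ atoms; the factorial normalization is chosen exactly to make $c_n\,p_N(n)^2\le\tau_{n+1}-\tau_{n+2}$, and telescoping does the rest. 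All the steps check out: $c_n\ge0$ and $\sum_{n\ge0}c_n\le\tau_1\le1$ so $c_{-1}\ge0$; infinitely many $c_n>0$ since $\tau_n\downarrow0$; $p_N$ has leading coefficient $1$ so $\|a\|\ge1$; and $p_N(n)=(n+1)!/(n-N+1)!$ for $n\ge N-1$. The paper's route gives a whole family of (symmetric) examples and connects to the orthogonal-polynomial machinery used throughout, with an essentially one-line determinacy argument. Your route is more elementary and does not need orthogonal polynomials at all, but your determinacy step via Stirling is heavier than necessary: since $c_k\le1/[(k+1)!]^2$ you have $\int e^{c|x|}\,d\mu\le e^{c}+\sum_{k\ge0}e^{ck}/(k!)^2<\infty$ for every $c>0$, and finiteness of an exponential moment already forces determinacy, bypassing the asymptotic estimate of $s_{2n}^{1/(2n)}$.
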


\begin{proof} We will construct  symmetric probability measures $\mu$
  with the desired property. Let
\begin{equation}\label{eq:sym3t}
 xP_n(x)=b_nP_{n+1}(x)+b_{n-1}P_{n-1}(x)
\end{equation}
be the three-term recurrence relation for the orthonormal polynomials
associated with a symmetric $\mu$. 
We shall choose $b_n>0,n\ge 0$ such that $\la_N\le \tau_N$ for all
$N\ge 0$. We always have $\la_0=\tau_0=1$ because $\mu$ is a
probability measure. Since $s_1=0$ we know that
$\la_1=\min(1,s_2),s_2=b_0^2$, so we can choose $0<b_0\le 1$ such that $\la_1=\tau_1$.

By Lemma~\ref{thm:upperbound} with $z_0=0$ we get
$$
\la_N\le \left(\sum_{n=0}^N |P_n(0)|^2\right)^{-1},
$$
and in particular 
\begin{equation}\label{eq:fast1}
\la_{2N+1}\le \la_{2N}\le \frac{1}{P_{2N}^{2}(0)}.
\end{equation}
By \eqref{eq:sym3t} we have
 $$
P_{2n}(0)=(-1)^n \frac{b_0b_2\ldots b_{2n-2}}{b_1b_3\ldots
    b_{2n-1}},\quad n\ge 1,
$$ 
and defining 
$$
r_k=\frac{b_{2k-1}}{b_{2k-2}}, \quad k\ge 1
$$
we get
$$
\la_{2N+1}\le \la_{2N}\le r^2_1r^2_2\ldots r^2_N,\quad N\ge 1,
$$
and we will  choose $r_k,k\ge 1,$ such that 
$$
 r^2_1r^2_2\ldots r^2_N\le \tau_{2N+1},\quad N\ge 1.
$$
First choose $0<r_1\le\sqrt{\tau_3}$, and when $r_1,\ldots,r_{N-1}$ have
been chosen, we choose 
$$
0<r_N\le \min\left(1,\frac{\sqrt{\tau_{2N+1}}}{r_1\ldots r_{N-1}}\right).
$$
It is clear that the sequence $(r_k)$ can be chosen such that $r_k\to 0$.
We next define $b_1=r_1b_0$ and we finally have an infinity of choices of $b_{2k-1},b_{2k-2}>0$ to
satisfy $r_k=b_{2k-1}/b_{2k-2},k\ge 2$.

 If $(r_k)$ converges to zero, the decay of $\lambda_n$ is faster
than exponential. Clearly the corresponding moment problem is determinate
since 
$$
|P_{2n}(0)|=(r_1r_2\ldots r_n)^{-1}\ge 1.
$$
 In particular,   the unique measure   $\mu$ solving the moment problem
carries no mass at $0.$
\end{proof}

After having chosen the numbers $r_k$ we have several possibilities for
selecting the coefficients $b_n.$
We will discuss three  such choices.

 \medskip
\noindent{\bf Example 1.}
For $k\ge 2$ let $b_{2k-2}=1$ and $b_{2k-1}=r_k$ and assume that
$r_k\to 0.$ Then the corresponding Jacobi matrix
 $J$ is bounded and it acts on $\ell^2$ by
$$(Jx)_n=b_nx_{n+1}+b_{n-1}x_{n-1},\quad x=(x_n) .$$
Let us compute the square of $J$. We have
 $$(J^2x)_n=b_nb_{n+1}x_{n+2}+(b_{n-1}^2+b_{n}^2)x_n+b_{n-2}b_{n-1}x_{n-2}.$$
By the choice of $(b_n)$ we get $b_nb_{n+1}\to 0$ and $b_{n-1}^2+b_n^2\to 1.$ Therefore
the operator $J^2$ is of the form
$J^2=I+K$, where $K$ is a compact operator. Hence its spectrum consists of a sequence of positive numbers
converging to 1. Thus the spectrum of $J$ is of the form $\sigma(J)=\{\pm t_n\} $, where $t_n$ is a sequence of positive
numbers converging to 1, so the measure $\mu$ is discrete with bounded
support.

\medskip
\noindent{\bf Example 2.}
Let $b_{2k-2}=r^{-1}_k$ and $b_{2k-1}=1$ and assume $r_k\to 0.$ Then the corresponding Jacobi matrix $J$ is unbounded.  
 By the recurrence relation we have
\begin{equation}\label{eq:ex2eq1}
x^2P_{2n}(x)=b_{2n}b_{2n+1}P_{2n+2}(x)+(b_{2n-1}^2+b_{2n}^2)P_{2n}(x)+
b_{2n-2}b_{2n-1}P_{2n-2}(x).
\end{equation}
Then $Q_n(y)=P_{2n}(\sqrt{y})$ is a polynomial of degree $n$ satisfying
$$
yQ_n(y)=r_{n+1}^{-1}Q_{n+1}(y)+(1+r_{n+1}^{-2})Q_n(y)+
r_n^{-1}Q_{n-1}(y).
$$
Letting $B_n=r_{n}^{-1}$ and $A_n= (1+r_{n+1}^{-2})$ we get
$$
\frac{B_n^2}{A_{n-1}A_{n}}=\frac{r_{n+1}^2}{(1+r_n^2)(1+r_{n+1}^2)}\stackrel{n}{\longrightarrow}
0,
$$
so by   Chihara's Theorem (see \cite[Th. 8]{ch} and \cite[Theorem 2.6]{rs})  we see
that the orthogonality measure $\nu$ for $Q_n(y)$ is
discrete. However, $\nu$ is the image measure of the symmetric measure
$\mu$ under the mapping $x\to x^2$, so also $\mu$ is discrete with
unbounded support.

\medskip
\noindent{\bf Example 3.}
Let $b_{2k-2}=r^{-1/2}_k$ and $b_{2k-1}=r^{1/2}_k$. With
$Q_n(y)=P_{2n}(\sqrt{y})$ as in Example 2 we get from
\eqref{eq:ex2eq1}
$$
yQ_n(y)=Q_{n+1}(y)+a_nQ_n(y)+Q_{n-1}(y)
$$
where $a_n=r_n+1/r_{n+1}$. If $r_k\to 0$ we see again that $\mu$ is
discrete with unbounded support.

\section{Slow decay}

The goal of this section is to prove that there exist moment sequences
$(s_n)$ such that the corresponding sequence $(\la_N)$ from
\eqref{eq:lambda} tends to 0 arbitrarily slowly. This is proved in
Theorem~\ref{thm:slow}.
 
Consider a symmetric probability measure $\mu$ on the real line with
moments of any order and infinite support.
The corresponding orthonormal polynomials
$(P_n)$ satisfy
  a symmetric recurrence relation \eqref{eq:sym3t}, 
where  $b_n>0$ for $n\ge 0.$  For simplicity we assume
that the second moment of $\mu$ is 1, i.e.
 $s_2=b_0^2=1$ and hence $\la_0=\la_1=1$. This can always be achieved by replacing $d\mu(x)$ by
 $d\mu(ax)$ for suitable $a>0$. Note that $P_0=1,P_1(x)=x$ in this case.

\begin{lem}\label{thm:pni} Let $(P_n)$ denote the orthonormal
  polynomials satisfying \eqref{eq:sym3t} with $b_0=1$. The sequence 
\begin{equation}\label{eq:un}
u_n=|P_n(i)|,\quad n\ge 0
\end{equation}
satisfies
\begin{equation}\label{eq:2}
u_{n+1}=\frac{1}{b_n}\,u_n+\frac{b_{n-1}}{b_n}\,u_{n-1},\quad n\ge 1, 
\end{equation}
with $u_0=u_1=1$. Moreover, for $n\ge 0$ 
$$
|P_n(z)|\le u_n,\quad |z|\le 1. 
$$
\end{lem}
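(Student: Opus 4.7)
The plan is to exploit the symmetry of $\mu$, which forces $P_n(-x)=(-1)^nP_n(x)$, so that $P_n$ has only even (respectively odd) powers of $x$ when $n$ is even (odd). Consequently $i^{-n}P_n(i)$ is real, which will let me control the absolute value $u_n=|P_n(i)|$ by means of a clean recurrence with positive coefficients.

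For the recurrence \eqref{eq:2}, set $v_n:=i^{-n}P_n(i)$. Then $v_n\in\mathbb R$ by the parity observation, and $v_0=1$, $v_1=i^{-1}\cdot i=1$. Substituting $x=i$ in \eqref{eq:sym3t} and dividing by $i^{n+1}$ gives
$$
v_n \;=\; b_n v_{n+1} + i^{-2}\,b_{n-1} v_{n-1} \;=\; b_n v_{n+1} - b_{n-1} v_{n-1},
$$
so $b_n v_{n+1}=v_n+b_{n-1}v_{n-1}$. Starting from $v_0=v_1=1>0$ and using $b_n>0$, induction yields $v_n>0$ for all $n$. Hence $u_n=|i^n|\,|v_n|=v_n$, and the recurrence for $v_n$ translates verbatim into \eqref{eq:2}.

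For the bound $|P_n(z)|\le u_n$ on the closed unit disk, I would proceed by induction on $n$. The base cases $|P_0(z)|=1=u_0$ and $|P_1(z)|=|z|\le 1=u_1$ are immediate. For the induction step, rewrite \eqref{eq:sym3t} as $b_n P_{n+1}(z)=zP_n(z)-b_{n-1}P_{n-1}(z)$ and apply the triangle inequality, using $|z|\le 1$, to obtain
$$
|P_{n+1}(z)| \;\le\; \frac{1}{b_n}|P_n(z)| + \frac{b_{n-1}}{b_n}|P_{n-1}(z)| \;\le\; \frac{1}{b_n}u_n + \frac{b_{n-1}}{b_n}u_{n-1} \;=\; u_{n+1},
$$
where the second inequality uses the induction hypothesis together with the crucial fact that the coefficients $1/b_n$ and $b_{n-1}/b_n$ appearing in \eqref{eq:2} are positive. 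This is precisely why the sign-flip from $i^{-2}=-1$ in the first step matters: it produces a recurrence with positive coefficients so that the induction propagates termwise inequalities.

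There is no real obstacle here; the only point one has to be careful with is the parity argument that guarantees $v_n=i^{-n}P_n(i)$ is real (so $u_n=v_n$), and the sign bookkeeping when dividing the recurrence by $i^{n+1}$.
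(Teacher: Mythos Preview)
Your proof is correct and follows the same overall strategy as the paper: identify $u_n$ with the positive real number $i^{-n}P_n(i)$, derive the recurrence \eqref{eq:2} from \eqref{eq:sym3t}, and then use the triangle inequality plus induction for the bound on $|z|\le 1$. The only difference lies in how positivity of $v_n=i^{-n}P_n(i)$ is established. The paper argues directly from the symmetric zero structure, factoring $P_{2n}(x)=k_{2n}\prod(x^2-x_j^2)$ and $P_{2n+1}(x)=k_{2n+1}x\prod(x^2-y_j^2)$ to read off the sign of $P_n(i)$; you instead use parity to see $v_n\in\mathbb R$ and then prove $v_n>0$ inductively from the recurrence $b_nv_{n+1}=v_n+b_{n-1}v_{n-1}$. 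Your route is slightly more self-contained (it avoids invoking the reality and symmetry of the zeros), while the paper's route gives an explicit product formula for $u_n$ in terms of the zeros as a byproduct. Either way the induction step for the bound is identical.
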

\begin{proof} Let $k_n=b_{n,n}$ denote the (positive) leading coefficient of $P_n$ and
  let $x_1,x_2\ldots,x_n$ denote
 the positive zeros of $P_{2n}.$ Then  
 $$
P_{2n}(x)=k_{2n}(x^2-x_1^2)(x^2-x_2^2)\ldots(x^2-x_n^2),
$$
hence 
\begin{equation}\label{eq:3}
u_{2n}=(-1)^nP_{2n}(i)>0.
\end{equation}
Similarly, let $y_1,y_2\ldots,y_n$ denote the positive zeros of $P_{2n+1}.$  
Then
$$
P_{2n+1}(x)=k_{2n+1}x(x^2-y_1^2)(x^2-y_2^2)\ldots(x^2-y_n^2),
$$
hence 
\begin{equation}\label{eq:4}u_{2n+1}=(-1)^{n+1}i\,P_{2n+1}(i)>0.
\end{equation}
Combining \eqref{eq:sym3t}, \eqref{eq:3} and \eqref{eq:4}  gives  \eqref{eq:2}.

By \eqref{eq:sym3t} we get for $|z|\le 1$ 
$$
|P_{n+1}(z)|\le
\frac{1}{b_n}\,|P_n(z)|+\frac{b_{n-1}}{b_n}\,|P_{n-1}(z)|.
$$
Therefore, \eqref{eq:2} can be used to
 show by induction 
that $|P_n(z)|\le u_n.$
\end{proof}

\begin{prop}\label{thm:pni2} Assume that the coefficients $(b_n)$ from
  \eqref{eq:sym3t} satisfy $b_0=1$ and $b_{n-1}+1\le  b_n,$ for $n\ge 1$
  and let $u_n=|P_n(i)|$. Then
$$
 \max(u_{2n},u_{2n+1})\le \prod_{k=1}^n\max\left
  (\frac{1+b_{2k-2}}{b_{2k-1}},\frac{1+b_{2k-1}}{b_{2k}} \right),\quad
n\ge 0.
$$
\end{prop}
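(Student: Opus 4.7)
The plan is to set $M_n := \max(u_{2n},u_{2n+1})$ and prove inductively that
$$M_n \le M_{n-1}\cdot c_n,\quad c_n := \max\left(\frac{1+b_{2k-2}}{b_{2k-1}},\frac{1+b_{2k-1}}{b_{2k}}\right)\Big|_{k=n},$$
starting from $M_0 = \max(u_0,u_1) = 1$. Iterating gives exactly the stated bound.

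The first step is a one-line observation about the recurrence \eqref{eq:2}: if $u_{n-1},u_n\le V$ for some $V\ge 0$, then
$$u_{n+1} \;=\; \frac{u_n + b_{n-1}u_{n-1}}{b_n} \;\le\; V\cdot\frac{1+b_{n-1}}{b_n}.$$
Applying this with $n$ replaced by $2n-1$ and $V = M_{n-1}$ (valid because $u_{2n-2}$ and $u_{2n-1}$ are both $\le M_{n-1}$ by definition) yields
$$u_{2n} \;\le\; M_{n-1}\cdot\frac{1+b_{2n-2}}{b_{2n-1}}.$$
The hypothesis $b_{n-1}+1\le b_n$ makes the fraction $\le 1$, so in particular $u_{2n}\le M_{n-1}$; combined with $u_{2n-1}\le M_{n-1}$, the same lemma applied at index $2n$ gives
$$u_{2n+1} \;\le\; M_{n-1}\cdot\frac{1+b_{2n-1}}{b_{2n}}.$$
Both bounds are $\le M_{n-1}\cdot c_n$, hence $M_n\le M_{n-1}\cdot c_n$, and induction closes the argument.

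The only place the hypothesis $b_{n-1}+1\le b_n$ is really used is to guarantee $u_{2n}\le M_{n-1}$, which is what allows the ``max'' to be carried across the odd step inside a single block of length two. Without that monotonicity the bound at $2n+1$ would have to use $u_{2n}$ itself, producing a product of two ratios per step and destroying the stated form. I do not foresee a genuine obstacle; the computation is essentially bookkeeping on the two-term recurrence, and the base case $M_0=1$ is immediate from $u_0=u_1=1$.
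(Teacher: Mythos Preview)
Your proof is correct and follows essentially the same approach as the paper: both arguments bound $u_{n+1}$ by $\frac{1+b_{n-1}}{b_n}\max(u_{n-1},u_n)$, use the hypothesis $b_{n-1}+1\le b_n$ to deduce that $\max(u_k,u_{k+1})$ is nonincreasing in $k$, and then combine these at indices $2n-1$ and $2n$ to obtain the recursive inequality $\max(u_{2n},u_{2n+1})\le c_n\max(u_{2n-2},u_{2n-1})$. The only cosmetic difference is that the paper first records the monotonicity of $\max(u_{k-1},u_k)$ for all $k$, whereas you invoke it just at the one place it is needed.
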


\begin{proof}
Since
 $$
\frac{1+b_{k-1}}{b_k}\le 1,
$$
we get from \eqref{eq:2}
$$
u_{k+1}\le \max(u_{k-1},u_{k}),\quad k\ge 1.
$$
We have clearly
$$
u_k\le \max(u_{k-1},u_{k}),
$$
thus
$$
\max(u_{k},u_{k+1})\le \max(u_{k-1},u_{k}),\quad k\ge 1.
$$
This implies by \eqref{eq:2}
$$
u_{n+1}\le \frac{1+b_{n-1}}{b_n}\max(u_{n-1},u_{n})\le
\frac{1+b_{n-1}}{b_n}\max(u_{n-2},u_{n-1}),
$$
and replacing $n$ by $n-1$ in the first inequality
$$
 u_{n}\le \frac{1+b_{n-2}}{b_{n-1}}\max(u_{n-2},u_{n-1}).
$$
Combining the last two inequalities gives
$$
\max(u_n,u_{n+1})\le \max\left (\frac{1+b_{n-2}}{b_{n-1}},\frac{1+b_{n-1}}{b_{n}}\right )\,\max(u_{n-2},u_{n-1}),\quad n\ge 2,
$$
which implies the conclusion because $u_0=u_1=1.$
\end{proof}

 \begin{lem}\label{thm:lambda} Let $(b_n)$ and $(u_n)$ be as in
   Proposition~\ref{thm:pni2}. Then the sequence of eigenvalues
   $(\la_N)$ from \eqref{eq:lambda} satisfies 
$$\lambda_N\ge \left (\sum_{k=0}^N u_k^2\right )^{-1}.$$
\end{lem}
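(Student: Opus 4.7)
The plan is to apply the Rayleigh quotient characterization
$$
\lambda_N = \min_{0\ne a\in\mathbb{C}^{N+1}} \frac{\langle\mathcal{H}_N a,a\rangle}{\|a\|^2}
$$
to an arbitrary nonzero vector $a$, rewrite both the numerator and the denominator by expanding in the orthonormal basis $(P_n)$, and then bound the quotient from below using Cauchy--Schwarz together with the pointwise estimate $|P_n(z)|\le u_n$ for $|z|\le 1$ supplied by Lemma~\ref{thm:pni}.

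Concretely, given nonzero $a=(a_0,\ldots,a_N)$ I would form the polynomial $p(x)=\sum_{k=0}^N a_k x^k$ and expand it as $p=\sum_{n=0}^N c_n P_n$. Orthonormality of $(P_n)$ with respect to $\mu$ gives $\langle\mathcal{H}_N a,a\rangle = \int|p|^2\,d\mu = \sum_n |c_n|^2$. On the other hand, Parseval on the unit circle (with normalized arclength $dt/(2\pi)$) identifies $\|a\|^2 = \frac{1}{2\pi}\int_0^{2\pi}|p(e^{it})|^2\,dt$, since $p(e^{it})=\sum a_k e^{ikt}$ and the exponentials $e^{ikt}$ are orthonormal for $0\le k\le N$.

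The core step is a pointwise Cauchy--Schwarz bound on the unit circle,
$$
|p(e^{it})|^2 = \Bigl|\sum_{n=0}^N c_n P_n(e^{it})\Bigr|^2 \le \Bigl(\sum_{n=0}^N |c_n|^2\Bigr)\Bigl(\sum_{n=0}^N |P_n(e^{it})|^2\Bigr) \le \Bigl(\sum_{n=0}^N |c_n|^2\Bigr)\Bigl(\sum_{k=0}^N u_k^2\Bigr),
$$
the last inequality being exactly Lemma~\ref{thm:pni} applied at each point of the unit circle. Averaging in $t$ and combining with the two previous identities yields $\|a\|^2 \le \langle\mathcal{H}_N a,a\rangle\sum_{k=0}^N u_k^2$, so every Rayleigh quotient is at least $\bigl(\sum_{k=0}^N u_k^2\bigr)^{-1}$; taking the minimum over $a$ proves the lemma.

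I do not foresee any real obstacle: once Lemma~\ref{thm:pni} is in hand the argument is a two-line application of Cauchy--Schwarz. The only points to keep track of are the matching of the two Parseval-type identities (one against $\mu$ on $\mathbb{R}$, one against $dt/(2\pi)$ on the unit circle) and the fact that the $P_n$ have real coefficients, which is what allows the pointwise bound on the closed unit disk to be used for $P_n(e^{it})$.
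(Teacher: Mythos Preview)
Your argument is correct and is essentially the same as the paper's. The only difference is cosmetic: the paper invokes the inequality
\[
\lambda_N \ge \left(\frac{1}{2\pi}\int_0^{2\pi}\sum_{k=0}^N |P_k(e^{it})|^2\,dt\right)^{-1}
\]
from \cite[(1.12)]{B:C:I} and then bounds the integrand via Lemma~\ref{thm:pni}, whereas you unpack that cited inequality directly (Rayleigh quotient, the two Parseval identities, and Cauchy--Schwarz) before applying the same pointwise bound $|P_k(e^{it})|\le u_k$.
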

 \begin{proof}
By \cite[(1.12)]{B:C:I} we have 
$$
 \lambda_N\ge \left (\frac{1}{2\pi}\int_0^{2\pi} \sum_{k=0}^N|P_k(e^{it})|^2\,dt\right )^{-1}.$$
The conclusion follows now by Lemma~\ref{thm:pni}, which shows that $|P_k(e^{it})|\le u_k.$ 
\end{proof}
Using the assumption of Proposition~\ref{thm:pni2}, we adopt the notation
\begin{equation}\label{eq:eta}
1-\eta_k=\max\left (\frac{1+b_{2k-2}}{b_{2k-1}},\frac{1+b_{2k-1}}{b_{2k}} \right ),\quad k\ge 1.
\end{equation}
 
\begin{prop}\label{thm:pni3}  Let $(b_n)$ and $(u_n)$ be as in
   Proposition~\ref{thm:pni2}. Then the sequence of eigenvalues
   $(\la_N)$ from \eqref{eq:lambda} satisfies 
$$\lambda_{2N+1}\ge \left (2+2\sum_{k=1}^N\prod_{l=1}^k(1-\eta_l)^2\right )^{-1}.  $$
\end{prop}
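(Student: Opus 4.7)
The plan is to combine Lemma~\ref{thm:lambda} with the estimate from Proposition~\ref{thm:pni2}, which already does the bulk of the work.

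First, by Lemma~\ref{thm:lambda} applied with $N$ replaced by $2N+1$, we have
$$
\lambda_{2N+1}\ge\left(\sum_{k=0}^{2N+1}u_k^2\right)^{-1},
$$
so it suffices to bound the sum $\sum_{k=0}^{2N+1}u_k^2$ from above by $2+2\sum_{k=1}^N\prod_{l=1}^k(1-\eta_l)^2$.

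Next I would split the sum into even and odd indices and use Proposition~\ref{thm:pni2} together with the definition \eqref{eq:eta}. Since $u_0=u_1=1$ contribute $2$ to the sum, and since for each $n\ge 1$ Proposition~\ref{thm:pni2} gives
$$
\max(u_{2n},u_{2n+1})\le\prod_{l=1}^n(1-\eta_l),
$$
we get $u_{2n}^2+u_{2n+1}^2\le 2\prod_{l=1}^n(1-\eta_l)^2$. Summing over $n=1,\dots,N$ yields
$$
\sum_{k=0}^{2N+1}u_k^2\le 2+2\sum_{n=1}^N\prod_{l=1}^n(1-\eta_l)^2,
$$
and substituting this bound into the inequality from Lemma~\ref{thm:lambda} produces the claimed estimate.

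There is no real obstacle here; the proposition is essentially a bookkeeping corollary of the two preceding results. The only point worth double-checking is the indexing: Proposition~\ref{thm:pni2} controls $u_{2n}$ and $u_{2n+1}$ by the same product of the first $n$ factors $(1-\eta_l)$, so pairing consecutive terms $u_{2n}^2$ and $u_{2n+1}^2$ is the natural way to recover the factor $2$ in front of the sum, matching the contribution $u_0^2+u_1^2=2$ of the base case.
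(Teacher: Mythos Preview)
Your proof is correct and follows essentially the same approach as the paper: apply Lemma~\ref{thm:lambda} with $2N+1$, separate off $u_0^2+u_1^2=2$, and bound each pair $u_{2k}^2+u_{2k+1}^2$ by $2\prod_{l=1}^k(1-\eta_l)^2$ via Proposition~\ref{thm:pni2}.
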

\begin{proof}
By Lemma~\ref{thm:lambda} and the fact that $u_0=u_1=1$ we have
$$
 \lambda_{2N+1}\ge \left (2+\sum_{k=1}^N (u_{2k}^2+u_{2k+1}^2)\right
)^{-1}.
$$
 Proposition~\ref{thm:pni2} states that
$$
 \max(u_{2k},u_{2k+1})\le \prod_{l=1}^k (1-\eta_l).
$$
These two inequalities give the conclusion.
\end{proof}

\begin{lem}\label{thm:bn} Let $(b_n)$ be as in  Proposition~\ref{thm:pni2}
and define $\xi_n$ by
$$
\frac{b_{n-1}+1}{b_n}=1-\xi_n,\quad n\ge 1.
$$
Then
\begin{equation}\label{eq:formula}
b_n= \prod_{k=1}^n(1-\xi_k) ^{-1}\left
  [2+\sum_{k=1}^{n-1}\prod_{l=1}^k(1-\xi_l)\right ],\quad n\ge 1.
\end{equation}
\end{lem}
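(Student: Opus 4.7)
The plan is to prove the formula by straightforward induction on $n$, using the recursion $b_n = (b_{n-1}+1)/(1-\xi_n)$ which is just a rewriting of the definition of $\xi_n$.

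First I would verify the base case $n=1$: since $b_0=1$, the defining relation gives $2/b_1 = 1-\xi_1$, so $b_1 = 2/(1-\xi_1)$, which matches the claimed formula (where the sum is empty).

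For the inductive step, assuming the formula at level $n$, I would compute $b_{n+1} = (b_n+1)/(1-\xi_{n+1})$ by adding $1$ to the inductive expression for $b_n$. The one small manipulation needed is to absorb this added $1$ into the existing sum: noting the elementary identity
\begin{equation*}
1 \;=\; \prod_{k=1}^n (1-\xi_k)^{-1}\,\cdot\,\prod_{l=1}^n (1-\xi_l),
\end{equation*}
one sees that
\begin{equation*}
b_n+1 \;=\; \prod_{k=1}^n (1-\xi_k)^{-1}\left[\,2 + \sum_{k=1}^{n}\prod_{l=1}^{k}(1-\xi_l)\right],
\end{equation*}
since the new summand (with $k=n$) is exactly $\prod_{l=1}^n (1-\xi_l)$. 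Dividing by $1-\xi_{n+1}$ then produces the formula at level $n+1$, completing the induction.

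There is no real obstacle here; the argument is a short telescoping-type induction. The only thing worth being careful about is the convention for the sum when $n=1$ (empty sum equals $0$), so the base case matches the recursion $b_1 = (b_0+1)/(1-\xi_1)$ correctly.
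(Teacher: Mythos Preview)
Your induction argument is correct and is essentially the same as the paper's proof, which simply unfolds the recursion $b_n=(1-\xi_n)^{-1}(1+b_{n-1})$ step by step until reaching $b_0+1=2$. Both arguments amount to the same telescoping computation.
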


\begin{proof} We have
$$
b_n=(1-\xi_n)^{-1}(1+b_{n-1})=(1-\xi_n)^{-1}\left(1+(1-\xi_{n-1})^{-1}(1+b_{n-2})\right)=\ldots,
$$
and after $n$ steps the formula ends using $b_0+1=2$.
\end{proof}

\begin{thm}\label{thm:slow} Let $(\tau_n)$ be a  decreasing sequence of
  positive numbers satisfying $\tau_n\to 0$ and $\tau_0<1$.
Then there exists a determinate symmetric probability measure $\mu$ on
$\mathbb R$ for which $\la_N\ge \tfrac12\tau_N$ for all $N$.

In other words, the eigenvalues $\la_N$ can decay arbitrarily slowly.
\end{thm}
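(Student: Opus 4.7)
The plan is to use the lower bound from Proposition~\ref{thm:pni3} (and its analogue for even indices derived from Lemma~\ref{thm:lambda} combined with Proposition~\ref{thm:pni2}) to reduce the theorem to choosing the sequence $(\eta_k)$ so that $S_N:=\sum_{k=1}^N c_k$ with $c_k:=\prod_{l=1}^k(1-\eta_l)^2$ grows slowly. Explicitly, those two bounds read
$$
\la_{2N+1}\ge \bigl(2+2S_N\bigr)^{-1},\qquad \la_{2N}\ge \bigl(2+2S_{N-1}+c_N\bigr)^{-1},
$$
and since $\tau_n$ is decreasing and $S_{N-1}+c_N/2\le S_N$, the single requirement $S_N\le 1/\tau_{2N}-1$ for every $N\ge 1$ forces $\la_N\ge \tfrac12\tau_N$.

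To secure this inequality I would define $c_N$ recursively by $c_0=1$ and
$$
c_N=\tfrac12\min\bigl(c_{N-1},\; 1/\tau_{2N}-1-S_{N-1}\bigr)\qquad(N\ge 1).
$$
Using $\tau_0<1$ to start, induction shows that $S_{N-1}<1/\tau_{2N}-1$ at every stage, so $c_N>0$ and $c_N<c_{N-1}$. Hence $\eta_k:=1-\sqrt{c_k/c_{k-1}}\in(0,1)$ is well-defined. I would then force both quantities inside the maximum in \eqref{eq:eta} to equal $1-\eta_k$ by setting $\xi_{2k-1}=\xi_{2k}=\eta_k$ and defining $b_n$ by $b_0=1$ and $b_n=(1+b_{n-1})/(1-\xi_n)$; Lemma~\ref{thm:bn} then produces the explicit formula for $b_n$. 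By Favard's theorem this $(b_n)$ together with $a_n\equiv 0$ determines a symmetric probability measure $\mu$ whose orthonormal polynomials satisfy \eqref{eq:sym3t}.

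The verification is then short: by construction $b_n\ge 1+b_{n-1}$, so Proposition~\ref{thm:pni2} applies and the two displayed lower bounds yield $\la_N\ge \tfrac12\tau_N$ for all $N$. Since $b_n\ge n+1$, Carleman's criterion $\sum 1/b_n=\infty$ makes $\mu$ determinate. The main delicate point is the recursive construction of $(c_N)$: the room $1/\tau_{2N}-1-S_{N-1}$ must stay strictly positive at every step, and this is why the factor $\tfrac12$ is inserted in the definition of $c_N$ (providing slack when the $\tau_n$ are constant on consecutive blocks) and why the strict hypothesis $\tau_0<1$ is used to seed the induction.
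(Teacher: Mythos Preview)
Your construction fails at the determinacy step, and in fact produces an \emph{indeterminate} measure. The definition
$$
c_N=\tfrac12\min\bigl(c_{N-1},\;1/\tau_{2N}-1-S_{N-1}\bigr)
$$
forces $c_N\le c_{N-1}/2$ for every $N$, hence $c_N\le 2^{-N}$ and $S_\infty=\sum_{k\ge1}c_k\le 1$. But by Proposition~\ref{thm:pni2} one has $u_{2n}^2+u_{2n+1}^2\le 2c_n$, so
$$
\sum_{n=0}^\infty |P_n(i)|^2=\sum_{n=0}^\infty u_n^2\le 2+2S_\infty\le 4<\infty,
$$
and by the classical criterion \eqref{eq:indet} the moment problem is indeterminate. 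Equivalently, your lower bound gives $\la_N\ge (2+2S_\infty)^{-1}\ge 1/4$ for all $N$, whereas determinacy requires $\la_N\to 0$. The ``slack'' factor $\tfrac12$ you inserted to make the induction run is precisely what kills the proof: it forces geometric decay of $c_N$, while the theorem needs $\sum c_k=\infty$ together with $S_N\le 1/\tau_{2N}-1$.

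Your Carleman argument is also backwards: $b_n\ge n+1$ gives $1/b_n\le 1/(n+1)$, which says nothing about divergence of $\sum 1/b_n$. In fact, with $c_N/c_{N-1}=1/2$ eventually, one gets $1-\xi_n=1/\sqrt2$ eventually, so $b_n$ grows like $(\sqrt2)^n$ and $\sum 1/b_n<\infty$. The paper handles both issues simultaneously by first invoking Lemma~\ref{thm:concave} to produce a \emph{concave} increasing unbounded sequence $d_n\le 1/\tau_n$: concavity makes $c_k=(d_{2k}-d_{2k-2})/2$ decreasing (so $\eta_k\ge 0$), while $d_n\to\infty$ forces $\sum c_k=\infty$; the latter is then parlayed into $\sum 1/b_{2n}=\infty$ via the explicit formula of Lemma~\ref{thm:bn}. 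That balancing act---$c_k$ decreasing but not summable---is the crux you are missing.
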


The proof depends on the following 

\begin{lem}\label{thm:concave}
Let $(e_n)$ be an increasing sequence of positive numbers such that
$e_0>1$ and $\lim e_n=\infty$.
 There exists 
a strictly increasing concave sequence $(d_n)$ such that
$d_0=1$, $d_n\le e_n$ for all $n$ and $\lim d_n=\infty$.  
\end{lem}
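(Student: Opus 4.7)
The plan is to build $(d_n)$ by a one-step greedy recursion that always takes the largest slope consistent with concavity and with the ceiling $e_n$. Set $d_0=1$, adopt the convention $s_0=+\infty$, and for $n\ge 1$ define
\[
s_n=\min(s_{n-1},\,e_n-d_{n-1}),\qquad d_n=d_{n-1}+s_n.
\]
By construction $s_n\le s_{n-1}$, so the successive differences of $(d_n)$ are non-increasing, i.e.\ $(d_n)$ is concave. A one-line induction yields $d_n\le d_{n-1}+(e_n-d_{n-1})=e_n$, and, since $e_n>e_{n-1}\ge d_{n-1}$, each $s_n$ is strictly positive, so $(d_n)$ is strictly increasing starting from $d_0=1$.

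The crux is then to verify $d_n\to\infty$, and here I would argue by a dichotomy on which argument realizes the minimum in the definition of $s_n$. If $s_n=e_n-d_{n-1}<s_{n-1}$ for infinitely many $n$, then $d_n=e_n$ along that subsequence, so $\limsup d_n=\infty$, and monotonicity of $(d_n)$ forces $d_n\to\infty$. Otherwise there is some $N_0$ with $s_n=s_{N_0}=:c>0$ for all $n\ge N_0$, and $d_n=d_{N_0}+(n-N_0)c$ diverges linearly. Either way one obtains the desired $d_n\to\infty$ while keeping $d_n\le e_n$.

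The only point needing mild care is the strict positivity of $e_n-d_{n-1}$, for which I am using strict monotonicity of $(e_n)$; if the hypothesis provides only a non-strictly increasing sequence, one can either pass to a strictly increasing refinement of $(e_n)$ (which still tends to infinity) and linearly interpolate, or replace $e_n$ by $e_n-\varepsilon_n$ for a positive sequence decreasing to $0$ with $\varepsilon_0<e_0-1$. I do not expect any substantive obstacle here: the real content of the argument is the greedy recursion together with the two-case analysis for divergence, both of which are completely routine.
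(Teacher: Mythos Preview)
Your argument is correct. The greedy recursion $s_n=\min(s_{n-1},e_n-d_{n-1})$, $d_n=d_{n-1}+s_n$ indeed produces a concave sequence below $(e_n)$, and the dichotomy (either $d_n=e_n$ infinitely often, or $s_n$ is eventually a positive constant) cleanly forces divergence; your remark on handling a merely non-decreasing $(e_n)$ via $e_n-\varepsilon_n$ is the right fix, and it is needed since the paper does allow constant stretches in $(e_n)$.

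The paper takes a more geometric route: it converts $(e_n)$ into a step function $f$ on $[0,\infty)$, joins the jump points by line segments to get a piecewise linear $g\le f$, and then traces $g$ from $(0,1)$, continuing straight at every node where the slope would increase until the line re-enters $g$ (or forever); the resulting concave piecewise linear $h$ gives $d_n=h(n)$. Your recursion is the discrete incarnation of the same principle---never raise the slope, drop it only when the ceiling forces you to---but it avoids the intermediate piecewise-linear scaffolding and the somewhat delicate case analysis at nodes, making it shorter and more self-contained. The paper's picture is perhaps easier to visualize and makes concavity of $h$ geometrically evident; your version trades that for brevity and a cleaner divergence argument.
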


\begin{proof}
Define a function $f(x)$ on $[0,\infty)$ by $f(0)=e_0$ and
$f(x)=e_n$ for $n-1<x\le n,$ for $n\ge 1.$ This function is left
continuous. The discontinuity points in $]0,\infty[$  are  
denoted by $e_{n_k}$ for a strictly  increasing subsequence $n_k$ of natural numbers. Consider the sequence $A_k$ of points in the plane
given by $A_0$=$(0,1)$ and $A_k=(n_k,e_{n_k})$ for $k\ge 1.$  If we connect every two consecutive points $A_k$ and $A_{k+1}$
by the line segment we will
obtain a graph of a strictly increasing piecewise linear function $g(x)$ such that $g(x)\le f(x).$ Moreover
$g(x)$ tends to infinity at infinity. We are going to construct the graph of a concave function $h(x)$ such that
$h(x)\le g(x),$ $h(0)=1$ and $h(x)\to \infty$ as $x\to \infty.$ Once it is done the sequence $d_n=h(n)$ satisfies
the conclusion of the lemma. We will construct the graph of $h(x)$ by tracing the graph $\Gamma$ of $g(x).$ The points of   $\Gamma$ where the slope changes  will be called nodes.

We start at the point $(0,1)$ and draw a graph of the function $h(x)$. We go along the first line  segment of $\Gamma$  
 until we reach the first node. Then we inspect the slope of the next line segment of $\Gamma.$ If it is smaller than the slope
of the previous segment we continue along $\Gamma $ until we reach the next node. Otherwise we do not change slope
and continue drawing the straight line (below $\Gamma$). In this case 
  two possibilities may occur. The line   does not hit   $\Gamma.$ Then the graph of $h(x)$ is constructed.
Otherwise the line hits $\Gamma.$ Then  two cases are considered. If the line hits
a node of $\Gamma,$   then we follow the procedure described above for the first node. 
 If the line hits an interior point of a segment $\gamma$ of $\Gamma$, then we continue along
the segment $\gamma$ until we reach the next node, where we follow the procedure
described for the first node. We point out that the slope of the
segment $\gamma$ is necessarily strictly smaller than the slope of the
straight line followed before hitting $\gamma$.

In this way   a graph of $h(x)$ with the required properties is constructed. Observe that if the graph of $h(x)$ has
infinitely many points in common with $\Gamma$, then clearly $h(x)\to \infty $ as $x\to \infty.$ But if
there are only finitely many points in common with $\Gamma$, then $h(x)$ is eventually linear with a positive slope, hence
  $h(x)\to \infty $ as $x\to \infty.$ 

\end{proof}

\noindent{\it Proof of Theorem~\ref{thm:slow}}. Defining $e_n=1/\tau_n$,
there exists by Lemma~\ref{thm:concave} a
 concave, strictly increasing sequence $(d_n)$ with $d_0=1$ and $\lim
 d_n=\infty$ and such that $d_n\le e_n$. Moreover, we may
assume that $d_n\le {n+1}$ by replacing $d_n$ by $\min(d_n,n+1 ).$ In this way
we may also assume that $d_2\le 3.$
This implies that there exists a decreasing sequence of positive
numbers $c_k,k\ge 1$ such that $c_1\le 1$ and
$$
d_{2n}= 1+2\sum_{k=1}^nc_k.
$$
In fact, we define 
$$
c_1=(d_2-1)/2,\quad c_n=(d_{2n}-d_{2n-2})/2,\quad n\ge 2,
$$ 
so $(c_n)$ is decreasing because $d_{2n}$ is concave.

Let the sequence $\eta_k$ be defined by
\begin{equation}\label{eq:etak}
1-\eta_1=\sqrt{c_1},\quad
1-\eta_k=\sqrt{\frac{c_{k}}{c_{k-1}}},\quad k\ge 2.
\end{equation}
Then $\eta_k\ge 0$ and  
\begin{equation}\label{eq:d}
d_{2n}=1+2\sum_{k=1}^n \prod_{l=1}^k(1-\eta_l)^2.
\end{equation}

Define the sequence $\xi_k$ by
$$
\xi_{2k-1}=\xi_{2k}=\eta_k, \quad k\ge 1.
$$
Inspired by formula \eqref{eq:formula} we finally define a positive sequence $(b_n)$ by $b_0=1$ and
$$
b_n= \prod_{k=1}^n(1-\xi_k)^{-1}\left
  [2+\sum_{k=1}^{n-1}\prod_{l=1}^k(1-\xi_l)\right ],\quad n\ge 1.
$$
 Then we get for $n\ge 1$

\begin{eqnarray*}
b_{2n}&\le&\prod_{k=1}^n(1-\eta_k)^{-2}\left
  [3+2\sum_{k=1}^{n-1}\prod_{l=1}^k(1-\eta_l)^2\right ]\\
&=& 2\frac{2+d_{2n-2}}{d_{2n}-d_{2n-2}}<2\frac{2+d_{2n}}{d_{2n}-d_{2n-2}},
\end{eqnarray*}
where we used formula \eqref{eq:d}. This gives
$$
\frac{1}{b_{2n}} > \frac{d_{2n}-d_{2n-2}}{2(2+d_{2n})},
$$
and since $d_{2n}$ tends to infinity we get
\begin{equation}\label{eq:Carl}
\sum_{n=1}^\infty \frac{1}{b_{2n}}=\infty.
\end{equation}
In fact, assuming the contrary we get
$$
\infty > \sum_{n=1}^\infty \frac{1}{b_{2n}} > \sum_{n=1}^\infty\frac{d_{2n}-d_{2n-2}}{2(2+d_{2n})},
$$
so there exists $N\in\mathbb N$ such that for all $p\in\mathbb N$
$$
\frac12\ge \sum_{n=N+1}^{N+p}\frac{d_{2n}-d_{2n-2}}{2+d_{2n}}> 
 \sum_{n=N+1}^{N+p}\frac{d_{2n}-d_{2n-2}}{2+d_{2N+2p}}=\frac{d_{2N+2p}-d_{2N}}{2+d_{2N+2p}},
$$
but the right-hand side converges to 1 for $p\to\infty$, which is a contradiction.

The positive sequence $(b_n)$ defines a  system of orthonormal
polynomials via \eqref{eq:sym3t}. The corresponding symmetric probability
measure is determinate by Carleman's theorem because of
\eqref{eq:Carl}. 
Moreover,  by Proposition~\ref{thm:pni3} and formula \eqref{eq:d} we get
$$
\lambda_{2N}\ge \lambda_{2N+1}\ge \frac{1}{2d_{2N}}\ge
\frac{1}{2e_{2N}}=\frac12\tau_{2N}\ge\frac{1}{2}\tau_{2N+1}.\quad\square
 $$

\section{The indeterminate case}

\noindent Let $(s_n)$ be the  moment sequence \eqref{eq:HMP}. The inequality
$$
\sum_{n,m=0}^N s_{n+m}a_n\overline{a_m}\ge
c\sum_{k=0}^N |a_k|^2,\quad a\in\mathbb C^{N+1}
$$ 
can be rewritten
\begin{equation}\label{eq:fd1}
\int\left | \sum_{k=0}^N a_kx^k\right |^2\,d\mu(x)\ge
c\sum_{k=0}^N |a_k|^2.
\end{equation}

If we write
 $$
 \sum_{k=0}^N a_kx^k= \sum_{n=0}^N c_nP_n(x)
$$
and use \eqref{eq:OP1}, then \eqref{eq:fd1} takes the form
$$
\sum_{n=0}^N |c_n|^2\ge c \sum_{k=0}^N \left|\sum_{n=k}^N
  b_{k,n}c_n\right|^2.
$$
This immediately gives the following result:

\begin{lem}\label{thm:utm} The eigenvalues $\la_N$ are bounded below
  by a constant $c>0$ if and only if the upper triangular matrix
$\mathcal B=(b_{k,n})$ given by \eqref{eq:utmB}
corresponds to  a bounded operator on $\ell^2$ of norm $\le 1/\sqrt{c}$.
\end{lem}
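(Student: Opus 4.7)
The plan is to read the inequality displayed just above the lemma as a statement about the operator $\mathcal B_N$. Setting $\mathbf{c}=(c_0,\ldots,c_N)^T\in\mathbb C^{N+1}$, the right-hand side of that inequality is $c\,\|\mathcal B_N\mathbf{c}\|^2$ and the left-hand side is $\|\mathbf{c}\|^2$, so the assertion that the inequality holds for every $\mathbf{c}$ is exactly the bound $\|\mathcal B_N\|\le 1/\sqrt{c}$. The change of variables $\mathbf{c}\mapsto\mathbf{a}$ defined by $a_k=\sum_{n=k}^N b_{k,n}c_n$ is a bijection of $\mathbb C^{N+1}$, since the upper triangular matrix $\mathcal B_N$ has strictly positive diagonal entries $b_{n,n}$ (the leading coefficients of the orthonormal polynomials $P_n$). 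Hence the rephrased inequality is equivalent to \eqref{eq:fd1}, i.e.\ to $\lambda_N\ge c$. In particular, $\lambda_N\ge c$ for every $N$ is equivalent to $\|\mathcal B_N\|\le 1/\sqrt{c}$ for every $N$.

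Next I would pass from the truncations $\mathcal B_N$ to the infinite matrix $\mathcal B$. Because $\mathcal B$ is upper triangular, for any $\mathbf{c}$ supported on $\{0,\ldots,N\}$ and embedded in $\ell^2$ by zero extension, the identity $b_{k,n}=0$ for $k>n$ forces $\mathcal B\mathbf{c}=\mathcal B_N\mathbf{c}$ (the right-hand side also extended by zeros). Consequently the operator norm of $\mathcal B$ on the dense subspace $\span\{\delta_n\mid n\ge 0\}\subset\ell^2$ equals $\sup_N\|\mathcal B_N\|$, so $\mathcal B$ extends to a bounded operator on $\ell^2$ of norm $\le 1/\sqrt{c}$ if and only if $\|\mathcal B_N\|\le 1/\sqrt{c}$ uniformly in $N$. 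Combined with the previous equivalence this yields the lemma.

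I do not anticipate any genuine obstacle: the first step is a direct rereading of the computation the authors have already carried out immediately above the statement, and the second step is the standard observation that the truncations of an upper triangular matrix are precisely its compressions to the spans of the first $N+1$ basis vectors, so their norms increase monotonically to the norm of the infinite matrix.
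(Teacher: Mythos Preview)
Your proposal is correct and follows essentially the same approach as the paper. The paper simply writes down the inequality
\[
\sum_{n=0}^N |c_n|^2 \ge c \sum_{k=0}^N \left|\sum_{n=k}^N b_{k,n}c_n\right|^2
\]
and declares that ``this immediately gives'' the lemma; your argument spells out the two points the paper leaves implicit, namely that the map $\mathbf{c}\mapsto\mathbf{a}=\mathcal B_N\mathbf{c}$ is a bijection of $\mathbb C^{N+1}$ and that the upper triangular structure yields $\|\mathcal B\|=\sup_N\|\mathcal B_N\|$ via the finitely supported vectors.
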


Recalling that the indeterminate case was characterized in
\cite{B:C:I} by $\la_N$ being bounded below by a positive constant, we
see that the indeterminate case is characterized by the boundedness of
the operator $\mathcal B$. For a characterization of the lower
boundedness of $\la_N$ in a more general setting see \cite{B:D}.
 As noticed in \cite[Remark, p. 72]{B:C:I}, the indeterminacy is also
 equivalent to the boundedness of the matrix $\mathcal K$, cf. \eqref{eq:kappa}, which is
 automatically in trace class if it is bounded. 

Concerning the matrices $\mathcal A,\mathcal K,\mathcal B$, given by 
\eqref{eq:ajk}, \eqref{eq:kappa},\eqref{eq:utmB} respectively, we have:

\begin{prop}\label{thm:matrix} Assume that $\mu$ is
  indeterminate. Then the following matrix equations hold 
\begin{enumerate}
\item[(i)] $\mathcal K=\mathcal B^*\mathcal B$,
\item[(ii)] $\mathcal A=\mathcal B\mathcal B^*$.
\end{enumerate}
$\mathcal A,\mathcal B,\mathcal K$ are of trace class and 
$$
\tr(\mathcal A)=\tr(\mathcal K)=\rho_0,
$$
where $\rho_0$ is defined in \eqref{eq:rho}.

Furthermore, the sequence 
\begin{equation}\label{eq:ck}
c_k=\sqrt{a_{k,k}}=\left(\sum_{n=k}^\infty |b_{k,n}|^2\right)^{1/2},
\end{equation}
satisfies
\begin{equation}\label{eq:adelta}
\lim_{k\to\infty} k\root k\of{c_k}=0,
\end{equation}
and the matrix $\mathcal A=(a_{j,k})$ has the following property
\begin{equation}\label{eq:lpfinite}
\sum_{j,k=0}^\infty |a_{j,k}|^\varepsilon <\infty
\end{equation}
for any $\varepsilon >0$.
\end{prop}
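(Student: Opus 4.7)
The plan is to verify (i) and (ii) by matching coefficients of $\mathcal K$ and $\mathcal A$ with those of $\mathcal B^*\mathcal B$ and $\mathcal B\mathcal B^*$, read off the trace-class statements from this, and then extract the decay bounds from Cauchy estimates on the entire kernel $K(z,w)$. For (i), I would expand $P_j(e^{it})$ and $P_k(e^{-it})$ as finite Fourier polynomials in \eqref{eq:kappa} and integrate termwise; orthogonality of $(e^{imt})$ yields $\kappa_{j,k}=\sum_n b_{n,j}b_{n,k}=(\mathcal B^*\mathcal B)_{j,k}$. For (ii), the Taylor coefficients of $K(z,w)=\sum_n P_n(z)P_n(w)$ at $(0,0)$ arise by termwise differentiation (justified by uniform convergence on compacts of $\mathbb C^2$ in the indeterminate case), giving $a_{j,k}=\sum_n b_{j,n}b_{k,n}=(\mathcal B\mathcal B^*)_{j,k}$.

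Once (i) and (ii) are in hand, $\mathcal K$ is a positive semi-definite operator with formal trace
\[
\sum_{j\ge 0}\kappa_{j,j}=\frac{1}{2\pi}\int_0^{2\pi}\sum_{j\ge 0}|P_j(e^{it})|^2\,dt=\rho_0<\infty
\]
by monotone convergence and \eqref{eq:rho}; hence $\mathcal K$ is trace class with $\tr\mathcal K=\rho_0$. Then $\mathcal B$ is Hilbert--Schmidt with $\|\mathcal B\|_{\mathrm{HS}}^2=\rho_0$, and $\mathcal A=\mathcal B\mathcal B^*$ is trace class with the same trace. To promote $\mathcal B$ itself from Hilbert--Schmidt to trace class, I would decompose $\mathcal B=\sum_k(e_ke_k^*)\mathcal B$ as a sum of rank-one operators whose trace norms are exactly $c_k$; this gives $\|\mathcal B\|_1\le\sum_k c_k$, which is finite once \eqref{eq:adelta} is established, since then $c_k$ decays faster than any geometric sequence.

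For \eqref{eq:lpfinite}, the two-variable Cauchy estimate $|a_{j,k}|\le M(r)r^{-j-k}$ with $M(r)=\sup_{|z|,|w|\le r}|K(z,w)|<\infty$ gives, for any $r>1$ and any $\varepsilon>0$,
\[
\sum_{j,k\ge 0}|a_{j,k}|^\varepsilon\le\frac{M(r)^\varepsilon}{(1-r^{-\varepsilon})^2}<\infty.
\]
For \eqref{eq:adelta}, retaining only the diagonal $j=k$ terms in the power-series expansion of $K$ under the circle integral yields $\Phi(r):=\frac{1}{2\pi}\int_0^{2\pi}K(re^{it},re^{-it})\,dt=\sum_{k\ge 0}c_k^2r^{2k}$ for all $r>0$, so $c_k\le r^{-k}\sqrt{\Phi(r)}$. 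Optimizing $r=r(k)$ and invoking the growth bound on $K$ from the Nevanlinna parametrization of the indeterminate moment problem then produces $k\sqrt[k]{c_k}\to 0$.

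The main obstacle is \eqref{eq:adelta}: the mere entireness of $\Phi$ only gives $c_k^{1/k}\to 0$, and extracting the additional factor $k$ requires quantitative growth bounds on $K(z,\bar z)$, drawing on the entire-function theory of indeterminate moment problems (the Berg--Pedersen bound on the order of the associated Nevanlinna functions).
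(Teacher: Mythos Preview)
Your proposal is correct and follows essentially the same route as the paper. The identities (i), (ii) and the trace statements are handled identically; your $\Phi(r)=\sum_k c_k^2 r^{2k}$ is exactly the quantity $\sum_k r^{2k}c_k^2=\frac{1}{2\pi}\int_0^{2\pi}|P(re^{it})|^2\,dt$ that the paper isolates via Parseval, and the optimization in $r$ is the same.

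Two small points of comparison. First, for \eqref{eq:lpfinite} you take a more direct route than the paper: the two-variable Cauchy bound $|a_{j,k}|\le M(r)r^{-j-k}$ already gives the result for any $r>1$, whereas the paper instead uses $|a_{j,k}|\le c_jc_k$ and then appeals to \eqref{eq:adelta}. Your argument is slightly cleaner here. Second, for \eqref{eq:adelta} you correctly identify the obstacle and the fix, but you do not need Berg--Pedersen or the Nevanlinna parametrization: the paper simply invokes the classical theorem of M.~Riesz (\cite[Th.~2.4.3]{Ak}) that $P(z)=(\sum_n|P_n(z)|^2)^{1/2}$ has minimal exponential type, i.e.\ $P(z)\le C_\varepsilon e^{\varepsilon|z|}$ for every $\varepsilon>0$. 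This gives $\Phi(r)\le C_\varepsilon^2 e^{2\varepsilon r}$, and setting $r=k/\varepsilon$ yields $\limsup_k k\sqrt[k]{c_k}\le e\varepsilon$ for every $\varepsilon>0$, which is \eqref{eq:adelta}. So the ``quantitative growth bound'' you need is exactly minimal exponential type, and it is already available in Akhiezer.
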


\begin{proof}
From \eqref{eq:OP1} we have
\begin{equation}\label{eq:OP2}
b_{k,n}=\frac{1}{2\pi i}\int_{|z|=r}P_n(z)z^{-(k+1)}\,dz
=r^{-k}\frac{1}{2\pi}\int_0^{2\pi} P_n(re^{it})e^{-ikt}\,dt.
\end{equation}
Consider $r=1.$ Then, by Parseval's identity we have
\begin{equation}\label{eq:parseval}
\sum_{n=0}^N\sum_{k=0}^n|b_{k,n}|^2=
\frac{1}{2\pi}\int_0^{2\pi} \sum_{n=0}^N|P_n(e^{it})|^2\,dt.
\end{equation}
 Therefore, in the indeterminate case the matrix $\mathcal B$ is
 Hilbert-Schmidt with Hilbert-Schmidt norm $\rho_0^{1/2}$, cf. \eqref{eq:rho}.
 Hence both $\mathcal B^*\mathcal B$ and $\mathcal B\mathcal B^*$ are of trace class with trace
 $\rho_0$. Formula (i) of Proposition~\ref{thm:matrix} is an immediate consequence of Parseval's
 identity.

We know that $K_N(z,w)$ defined in \eqref{eq:kernelpol} converges to $K(z,w)$, locally uniformly in $\mathbb C^2$,
hence
\begin{equation}\label{eq:ny}
 a_{j,k}^{(N)}=\sum_{n=\max(j,k)}^N
  b_{j,n}b_{k,n} \to a_{j,k}
\end{equation}
for each pair $(j,k)$. The series
$$
\sum_{n=\max(j,k)}^\infty b_{j,n}b_{k,n}= \sum_{n=0}^\infty  b_{j,n}b_{k,n}
$$
is absolutely convergent for each pair $(j,k)$ because $\mathcal B$ is
Hilbert-Schmidt, so \eqref{eq:ny} implies (ii). 

Defining 
\begin{equation}\label{eq:TR}
 c_k=||\mathcal B^*\delta_k||=\left(\sum_{n=k}^\infty |b_{k,n}|^2\right)^{1/2},
\end{equation} 
where $\delta_k,k=0,1,\ldots$ denotes the standard orthonormal basis
in $\ell^2$, we have the following estimate for $r>1$ using the Cauchy-Schwarz inequality
$$
\left(\sum_{k=0}^\infty c_k\right)^2\le
 \sum_{k=0}^\infty r^{-2k}\sum_{k=0}^\infty
r^{2k}c_k^2=\frac{r^2}{r^2-1}\sum_{k=0}^\infty r^{2k}\sum_{n=k}^\infty
|b_{k,n}|^2.
$$
However, by 
\eqref{eq:OP2} and by Parseval's identity
we have
\begin{equation}\label{eq:trzy}
\sum_{k=0}^\infty r^{2k}\sum_{n=k}^\infty |b_{k,n}|^2=\sum_{n=0}^\infty\sum_{k=0}^n r^{2k}|b_{k,n}|^2=\sum_{n=0}^\infty
\frac{1}{2\pi}\int_0^{2\pi} |P_n(re^{it})|^2\,dt.
 \end{equation}
Let now 
\begin{equation}\label{eq:P}
P(z)=\left (\sum_{n=0}^\infty |P_n(z)|^2\right )^{1/2},\quad
z\in\mathbb C.
\end{equation}
We finally get
$$
\sum_{k=0}^\infty c_k\le
\frac{r}{\sqrt{r^2-1}}\left(\frac{1}{2\pi}\int_0^{2\pi}|P(re^{it})|^2\,dt\right)^{1/2}<\infty,
$$
but since 
$$
\langle |\mathcal B^*|\delta_k,\delta_k\rangle\le
||\,|\mathcal B^*|\delta_k\,||=||\mathcal B^*\delta_k||
$$
this shows that $|\mathcal B^*|$ and hence $\mathcal B$ is of trace class. 

For a given $\varepsilon >0$ we have
$P(z)\le C_\varepsilon e^{\varepsilon |z|}$ by a theorem of M. Riesz,
cf. \cite[Th. 2.4.3]{Ak}, hence by \eqref{eq:TR} and \eqref{eq:trzy}
$$
\sum_{k=0}^\infty r^{2k}c_k^2\le C_\varepsilon^2 e^{2\varepsilon r}.
 $$
For $r=k/\varepsilon$ we get in particular
$$
\left(\frac{k}{\varepsilon}\right)^{2k}c_k^2\le
C_{\varepsilon}^2e^{2k},
$$
hence
$$
\limsup_{k\to\infty} k\root k\of{c_k}\le e\varepsilon,
$$
which shows \eqref{eq:adelta}.

Using $|a_{j,k}|\le c_jc_k$, it is enough to prove that
$\sum_{k=0}^\infty c_k^\varepsilon <\infty$ for  $0<\varepsilon
<1$, which is weaker than \eqref{eq:adelta}.
\end{proof}

For a sequence $\a=(\a_n)\in\ell^2$ we consider the function
\begin{equation}\label{eq:ent}
F_{\a}(z)=\sum_{n=0}^\infty \a_nP_n(z)=\sum_{n=0}^\infty\b_nz^n,
\end{equation}
which is an entire function of minimal exponential type because
$$
|F_{\a}(z)|\le ||\a||P(z),
$$
where $P(z)$ is given by \eqref{eq:P}. The following result is a
straightforward consequence of \eqref{eq:ent}.

\begin{prop}\label{thm:ent1} The sequence of coefficients $\b=(\b_n)$ of
the power series of $F_{\a}$ belongs to $\ell^2$ and is given by
$\b=\mathcal B\a$. The operator $\mathcal B:\ell^2\to\ell^2$ is one-to-one with dense
range $\mathcal B(\ell^2)$.
\end{prop}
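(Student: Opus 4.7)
The plan is to establish three separate assertions: (a) $\beta\in\ell^2$ and $\beta=\mathcal B\alpha$; (b) $\mathcal B$ is injective on $\ell^2$; (c) $\mathcal B(\ell^2)$ is dense. For (a), Proposition~\ref{thm:matrix} already gives that $\mathcal B$ is Hilbert--Schmidt, hence bounded, so $\mathcal B\alpha\in\ell^2$ whenever $\alpha\in\ell^2$. To identify the coordinates of $\mathcal B\alpha$ with the power-series coefficients of $F_\alpha$, I would apply Cauchy's integral formula on the circle $|z|=r$:
\[
\beta_k=r^{-k}\frac{1}{2\pi}\int_0^{2\pi}F_\alpha(re^{it})e^{-ikt}\,dt.
\]
On that circle the defining series $F_\alpha(z)=\sum_n\alpha_nP_n(z)$ converges absolutely and uniformly, thanks to the Cauchy--Schwarz bound $\sum_n|\alpha_n P_n(z)|\le\|\alpha\|\,P(z)$ and continuity of $P$ from \eqref{eq:P}. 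Interchanging summation and integration and invoking formula \eqref{eq:OP2} for $b_{k,n}$ then gives $\beta_k=\sum_{n\ge k}\alpha_n b_{k,n}=(\mathcal B\alpha)_k$.

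For (b), suppose $\mathcal B\alpha=0$, so that $F_\alpha\equiv 0$ by (a). The partial sums $g_N=\sum_{n\le N}\alpha_nP_n$ converge to $F_\alpha$ pointwise (in fact locally uniformly, by the same $\|\alpha-\alpha_{\le N}\|\,P(z)$ tail estimate), and since $(P_n)$ is orthonormal in $L^2(\mu)$ with $\sum|\alpha_n|^2<\infty$, they form a Cauchy sequence in $L^2(\mu)$ with some limit $g$ satisfying $\langle g,P_m\rangle_{L^2(\mu)}=\alpha_m$ for every $m$. Passing to an a.e.\ convergent subsequence identifies $g=F_\alpha$ $\mu$-a.e., so $F_\alpha\equiv 0$ forces $g=0$ in $L^2(\mu)$ and hence $\alpha_m=0$ for all $m$.

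For (c), I would show $\ker\mathcal B^*=\{0\}$. Since $b_{k,n}\in\mathbb R$ with $b_{k,n}=0$ for $k>n$, a direct computation yields $(\mathcal B^*\beta)_n=\sum_{k=0}^n b_{k,n}\beta_k$, so $\mathcal B^*$ is \emph{lower} triangular with strictly positive diagonal entries $b_{n,n}$ (the leading coefficients of the $P_n$). If $\mathcal B^*\beta=0$, then $b_{0,0}\beta_0=0$ forces $\beta_0=0$, and forward substitution inductively yields $\beta_n=0$ for all $n$, so $\mathcal B(\ell^2)$ is dense. The only step requiring genuine care is the identification in (b) of the pointwise sum $F_\alpha$ with the $L^2(\mu)$-limit of $(g_N)$; once that is in hand, the remaining arguments are bookkeeping against the Hilbert--Schmidt bound from Proposition~\ref{thm:matrix}.
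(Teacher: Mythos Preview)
Your proof is correct. The paper does not supply a proof of this proposition, declaring it ``a straightforward consequence of \eqref{eq:ent}'', so there is nothing to compare against in detail; your argument is a sound way to fill in what the authors leave implicit, and the $L^{2}(\mu)$ identification in (b) is the right way to pass from $F_{\alpha}\equiv 0$ to $\alpha=0$.
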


For a compact operator $T$ on $\ell^2$ we denote by
$\sigma_n(T),n=0,1,\ldots$ the singular values of $T$ in decreasing
order, i.e.
\begin{equation}\label{eq:sing}
\sigma_n(T)=\min_{V\subset \ell^2, \dim
  V=n}\quad\max_{\|v\|=1, \ v\perp V} \|T v\|.
\end{equation}

\begin{thm}\label{thm:lambdaNn}  Assume that $\mu$ is indeterminate. Let 
$$
\la_{N}=\la_{N,0}\le\la_{N,1}\le\ldots\le\la_{N,N}
$$
denote the $N+1$ eigenvalues of $\mathcal H_N$ and let 
$$
\la_{\infty,n}=\lim_{N\to\infty}\la_{N,n}.
$$
For $0\le n\le N$ we have
\begin{equation}\label{eq:sing1}
\sigma_n(\mathcal A)=\sigma_n(\mathcal B^*)^2\ge
\frac{1}{\la_{\infty,n}}\ge \frac{1}{\la_{N,n}}
\end{equation} 
and
\begin{equation}\label{eq:lambdainfty}
\lim_{n\to\infty} n^2\root n\of{\sigma_n(\mathcal A)}=0,\quad
\lim_{n\to\infty}\frac{\root n \of{\la_{\infty,n}}}{n^2}=\infty.
\end{equation}
\end{thm}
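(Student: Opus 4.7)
My plan is to combine the factorization $\mathcal A = \mathcal B\mathcal B^*$ from Proposition~\ref{thm:matrix}(ii) with the Courant--Fischer max--min principle to obtain \eqref{eq:sing1}, and then to convert the super-exponential bound \eqref{eq:adelta} on the column norms $c_k$ into the same type of bound on the singular values of $\mathcal B^*$.

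The equality $\sigma_n(\mathcal A) = \sigma_n(\mathcal B^*)^2$ is immediate: since $\mathcal B^*$ is compact, $\sigma_n(\mathcal B^*)^2$ are the eigenvalues of $|\mathcal B^*|^2 = \mathcal B\mathcal B^* = \mathcal A$ arranged in decreasing order. For the chain of inequalities, Cauchy's interlacing theorem applied to $\mathcal H_N$ as the leading principal submatrix of $\mathcal H_{N+1}$ shows that $\la_{N,n}$ is non-increasing in $N$, so $\la_{\infty,n}$ exists; since $\la_{N,n}\ge\la_N$ and $\la_N$ is bounded below by a positive constant in the indeterminate case \cite{B:C:I}, $\la_{\infty,n}>0$. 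By Theorem~\ref{thm:ABC} the $(n+1)$-th largest eigenvalue of $\mathcal A^{(N)}=\mathcal H_N^{-1}$ equals $1/\la_{N,n}$. Identifying $\mathbb C^{N+1}$ with the span of $\delta_0,\ldots,\delta_N$ and letting $P_N$ be the corresponding projection, one has for $v\in P_N\ell^2$
$$
\langle \mathcal A^{(N)} v, v\rangle = \|\mathcal B_N^* v\|^2 = \|P_N\mathcal B^* v\|^2 \le \|\mathcal B^* v\|^2 = \langle \mathcal A v, v\rangle,
$$
so maximising over $(n+1)$-dimensional subspaces $W\subset P_N\ell^2\subset\ell^2$ yields $1/\la_{N,n}\le\sigma_n(\mathcal A)$ by Courant--Fischer. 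Passing to the limit gives $\sigma_n(\mathcal A)\ge 1/\la_{\infty,n}$, and the monotonicity bound $\la_{\infty,n}\le\la_{N,n}$ finishes \eqref{eq:sing1}.

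For \eqref{eq:lambdainfty} I would approximate $\mathcal B^*$ by its column truncation. Let $P_{n-1}$ be the projection onto $\mathrm{span}(\delta_0,\ldots,\delta_{n-1})$ and set $T_n=\mathcal B^*P_{n-1}$; then $\mathrm{rank}\,T_n\le n$, so by the Eckart--Young characterisation of singular values,
$$
\sigma_n(\mathcal B^*) \le \|\mathcal B^* - T_n\| \le \|\mathcal B^*(I-P_{n-1})\|_{HS} = \left(\sum_{k=n}^{\infty}c_k^2\right)^{1/2}.
$$
The bound \eqref{eq:adelta} supplies, for every $\varepsilon>0$, an index $K_\varepsilon$ past which $c_k\le(\varepsilon/k)^k$. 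A short ratio test on $a_k=(\varepsilon/k)^{2k}$, using $a_{k+1}/a_k = \varepsilon^2(k+1)^{-2}(k/(k+1))^{2k}\to 0$, shows the tail $\sum_{k\ge n}a_k$ is dominated by a constant multiple of its leading term $(\varepsilon/n)^{2n}$ for $n$ large. Hence $\sigma_n(\mathcal B^*) \le C_\varepsilon(\varepsilon/n)^n$, whence $\limsup_n n\sqrt[n]{\sigma_n(\mathcal B^*)}\le\varepsilon$ for every $\varepsilon>0$, so the limit is $0$. Squaring gives $n^2\sqrt[n]{\sigma_n(\mathcal A)}\to 0$, and then $\la_{\infty,n}\ge 1/\sigma_n(\mathcal A)$ from \eqref{eq:sing1} forces $\sqrt[n]{\la_{\infty,n}}/n^2\to\infty$.

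The main obstacle is transferring the column-wise decay \eqref{eq:adelta} to the decay of $\sigma_n(\mathcal B^*)$; this hinges on approximating $\mathcal B^*$ by its $n$-column truncation $T_n$ and on the tail estimate for the sequence $(\varepsilon/k)^{2k}$, which is what makes the super-exponential decay survive the passage from column norms to singular values. Once this is in hand, the remainder of the argument is a direct combination of the factorisation $\mathcal A=\mathcal B\mathcal B^*$ and the Courant--Fischer principle.
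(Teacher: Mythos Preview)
Your proof is correct and follows essentially the same route as the paper: both establish \eqref{eq:sing1} from the factorisation $\mathcal A=\mathcal B\mathcal B^*$ together with the operator inequality $\mathcal B_N\mathcal B_N^*\le\mathcal B\mathcal B^*$ (you phrase this via Courant--Fischer on quadratic forms, the paper via $(\mathcal BPr_N)(\mathcal BPr_N)^*\le\mathcal B\mathcal B^*$), and both bound $\sigma_n(\mathcal B^*)$ by the tail $\bigl(\sum_{k\ge n}c_k^2\bigr)^{1/2}$ coming from the column truncation of $\mathcal B^*$. The only noticeable difference is in how the tail is estimated: you pass directly from \eqref{eq:adelta} to $c_k\le(\varepsilon/k)^k$ and use a ratio test to get $\sum_{k\ge n}c_k^2\le C_\varepsilon(\varepsilon/n)^{2n}$, whereas the paper inserts the weight $(k!)^2r^k/((n!)^2r^n)\ge 1$ to obtain $\sum_{k\ge n}c_k^2\le S(r)/((n!)^2r^n)$ with $S(r)=\sum_k(k!c_k)^2r^k<\infty$, and then lets $r\to\infty$ after taking $n$-th roots. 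Your version is slightly more elementary; the paper's version yields an explicit non-asymptotic bound \eqref{eq:sigman} valid for every $n$ and every $r\ge 1$.
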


\begin{proof} By \eqref{eq:sing} we get
$$
\sigma_n(\mathcal B^*)
\le \max_{\|v\|=1,\, v\perp \delta_0,\ldots,\delta_{n-1}} \|\mathcal B^*v\|.
$$
Let $\Pi_n$ denote the projection onto $\{\delta_0,\ldots,\delta_{n-1}\}^\perp.$
Thus by \eqref{eq:TR}
$$
\sigma_n(\mathcal B^*)\le \|\mathcal B^*\Pi_n\|\le \left
  (\sum_{k=n}^\infty c_k^2\right )^{1/2}.
$$
On the other hand, for $r\ge 1$ we have
$$
\sum_{k=n}^\infty c_k^2\le \sum_{k=n}^\infty c_k^2
\frac{(k!)^2r^k}{(n!)^2r^n} \le 
\frac{S(r)}{(n!)^2r^n},
$$ 
where 
$$
S(r):=\sum_{k=0}^\infty (k!c_k)^2r^k<\infty
$$
because of \eqref{eq:adelta} and $\root k\of{k!}\sim k/e$, which holds
by Stirling's formula. Therefore
$$
\sigma_n(\mathcal B^*)^2\le \frac{S(r)}{(n!)^2r^n},
$$
and since $\sigma_n(\mathcal B^*)=\sqrt{\sigma_n(\mathcal B\mathcal B^*)}$ we get
\begin{equation}\label{eq:sigman}
\sigma_n(\mathcal A)=\sigma_n(\mathcal B\mathcal B^*) \le \frac{S(r)}{(n!)^2r^n},\quad r\ge 1,
\end{equation} 
which proves the first assertion of \eqref{eq:lambdainfty}.

 Let $Pr_N$ denote the projection in $\ell^2$ onto
$\span\{\delta_0,\ldots,\delta_N\}$.
We then have
$$
(\mathcal BPr_N)(\mathcal BPr_N)^*=\mathcal BPr_N\mathcal B^*\le
\mathcal B\mathcal B^*,
$$
and therefore for $n\le N$
$$
\sigma_n(\mathcal B\mathcal B^*)\ge \sigma_n((\mathcal BPr_N)(\mathcal
BPr_N)^*)=\sigma_n(\mathcal B_N\mathcal B_N^*)=
\sigma_n(\mathcal H_N^{-1}),
$$
where the last equality follows by Theorem~\ref{thm:ABC}. The matrix
$\mathcal H_N^{-1}$ is positive definite, so its singular values are
the eigenvalues which are the reciprocals of the eigenvalues of
$\mathcal H_N$, i.e. $\sigma_n(\mathcal H_N^{-1})=1/\la_{N,n}$. This
gives \eqref{eq:sing1} and the second assertion in
\eqref{eq:lambdainfty} follows.
\end{proof}

\begin{thm}\label{thm:A} The trace class operator $\mathcal
  A:\ell^2\to\ell^2$ is positive with spectrum
$$
\sigma(\mathcal A)=\{0\}\cup\{\la_{\infty,n}^{-1}\mid
n=0,1,\ldots\}.
$$
\end{thm}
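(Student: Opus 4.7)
The plan is to exploit the factorization $\mathcal{A} = \mathcal{B}\mathcal{B}^*$ from Proposition~\ref{thm:matrix}. Positivity of $\mathcal{A}$ is immediate, and trace class has already been established there, so $\mathcal{A}$ is compact. As a compact positive self-adjoint operator on the infinite-dimensional space $\ell^2$, its spectrum consists of $0$ together with a (finite or countable) family of positive eigenvalues accumulating only at $0$; listed with multiplicity in decreasing order these eigenvalues coincide with the singular values $\sigma_n(\mathcal{A})$. Thus the theorem reduces to showing $\sigma_n(\mathcal{A}) = 1/\lambda_{\infty,n}$ for every $n \ge 0$.

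To do that, I would approximate $\mathcal{A}$ by the finite-rank operators $\mathcal{A}_N := \mathcal{B}_N\mathcal{B}_N^*$. Because $\mathcal{B}$ is upper triangular, its truncation $\mathcal{B}_N$, viewed as an infinite matrix with zeros outside the top-left $(N+1)\times(N+1)$ block, coincides with $\mathcal{B}\, Pr_N$, where $Pr_N$ is the orthogonal projection onto $\operatorname{span}\{\delta_0,\ldots,\delta_N\}$. Since Proposition~\ref{thm:matrix} shows $\mathcal{B}$ is Hilbert-Schmidt with squared Hilbert-Schmidt norm $\rho_0$, the tail $\sum_{n>N}\sum_{k\le n}|b_{k,n}|^2$ vanishes as $N\to\infty$, hence $\mathcal{B}_N\to\mathcal{B}$ in Hilbert-Schmidt norm and consequently $\mathcal{A}_N \to \mathcal{A}$ in operator norm.

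By Theorem~\ref{thm:ABC}, the matrix $\mathcal{A}_N$, as an operator on $\ell^2$, vanishes outside the top-left $(N+1)\times(N+1)$ block where it equals $\mathcal{H}_N^{-1}$; its nonzero eigenvalues are therefore $\{1/\lambda_{N,n} : 0 \le n \le N\}$, which arranged in decreasing order give $\sigma_n(\mathcal{A}_N) = 1/\lambda_{N,n}$ for $n\le N$ and $\sigma_n(\mathcal{A}_N) = 0$ for $n>N$. The Weyl/min-max Lipschitz estimate $|\sigma_n(S)-\sigma_n(T)| \le \|S-T\|$ applied to $\mathcal{A}_N\to\mathcal{A}$ then yields, for each fixed $n\ge 0$,
\begin{equation*}
\sigma_n(\mathcal{A}) \;=\; \lim_{N\to\infty} \sigma_n(\mathcal{A}_N) \;=\; \lim_{N\to\infty} \frac{1}{\lambda_{N,n}} \;=\; \frac{1}{\lambda_{\infty,n}},
\end{equation*}
and the spectral description $\sigma(\mathcal{A})=\{0\}\cup\{1/\lambda_{\infty,n}\mid n=0,1,\ldots\}$ follows at once.

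I expect no serious obstacle here: positivity and trace class are inherited from Proposition~\ref{thm:matrix}; the identification of the nonzero eigenvalues of $\mathcal{A}_N$ on $\ell^2$ with $\{1/\lambda_{N,n}\}$ is a direct restatement of Theorem~\ref{thm:ABC}; and the passage to the limit is the standard operator-norm continuity of the singular-value sequence. The only ingredient deserving a line of explicit verification is the operator-norm convergence $\mathcal{A}_N\to\mathcal{A}$, which is immediate from the Hilbert-Schmidt convergence $\mathcal{B}_N\to\mathcal{B}$ combined with the identity $\mathcal{A}-\mathcal{A}_N = (\mathcal{B}-\mathcal{B}_N)\mathcal{B}^*+\mathcal{B}_N(\mathcal{B}-\mathcal{B}_N)^*$.
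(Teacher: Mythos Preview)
Your argument is correct and follows essentially the same route as the paper: approximate $\mathcal{A}$ by $\mathcal{A}^{(N)}=\mathcal{B}_N\mathcal{B}_N^*$, use the Hilbert--Schmidt convergence $\mathcal{B}_N\to\mathcal{B}$ to get convergence of $\mathcal{A}^{(N)}$ to $\mathcal{A}$, identify the nonzero eigenvalues of $\mathcal{A}^{(N)}$ with $\{1/\lambda_{N,n}\}$ via Theorem~\ref{thm:ABC}, and pass to the limit. The paper states convergence in trace norm rather than operator norm and leaves the Weyl-type stability of singular values implicit, but these are cosmetic differences; your proof is in fact slightly more explicit on both points.
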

\begin{proof} We will consider $\mathcal A^{(N)}=(a_{j,k}^{(N)})$ and
  $\mathcal B_N$ as  finite rank
  operators on $\ell^2$ by adding zero rows and columns.
Clearly, $\mathcal B_N$ tends to $\mathcal B$ in the Hilbert-Schmidt
norm, and therefore $\mathcal A^{(N)}=\mathcal B_N\mathcal B_N^*$
tends to $\mathcal A=\mathcal B\mathcal B^*$ in the trace norm.

The result now follows since the spectrum of $\mathcal A^{(N)}$
consists of the numbers $\la_{N,n}^{-1},n=0,1,\ldots,N$, by Theorem~\ref{thm:ABC}.
\end{proof}

\section{The Stieltjes-Wigert polynomials}

For $0<q<1$ we consider the moment sequence $s_n=q^{-(n+1)^2/2}$ given
by
\begin{equation}\label{eq:SW1}
\frac{1}{\sqrt{2\pi\log(1/q)}}\int_0^\infty x^n\exp\left(-\frac{(\log x)^2}{2\log(1/q)}\right)\,dx.
\end{equation}
We call it the Stieltjes-Wigert moment sequence because 
Stieltjes proved that it is indeterminate (he considered the special
value $q=\tfrac12$) and Wigert \cite{Wig} found the corresponding orthonormal
polynomials
\begin{equation}\label{eq:SW2}
P_n(x;q)=(-1)^n\frac{q^{\frac{n}{2}+\frac{1}{4}}}{\sqrt{(q;q)_n}}\sum_{k=0}^n
\left[\begin{matrix}n\\k\end{matrix}\right]_q(-1)^kq^{k^2+\frac{k}{2}}x^k.
\end{equation}
Here we have used the Gaussian $q$-binomial coefficients
$$
\left[\begin{matrix}n\\k\end{matrix}\right]_q=
\frac{(q;q)_n}{(q;q)_k(q;q)_{n-k}},
$$
involving the $q$-shifted factorial
$$
(z;q)_n=\prod_{k=1}^n(1-zq^{k-1}),\quad z\in\mathbb
C,n=0,1,\ldots,\infty.
$$
We refer to \cite{G:R} for information about this notation and
$q$-series. We have followed the normalization used in Szeg\H{o}
\cite{Sz1}, where $s_0=1/\sqrt{q}$. The Stieltjes-Wigert moment problem has been extensively
studied in \cite {Chr} using a slightly different normalization.

\begin{lem}\label{thm:SW1} The double sum
$$
\sum_{n=0}^\infty\sum_{k=0}^\infty b_{j,n}b_{k,n}s_{k+l}
$$
is absolutely convergent for each $j,l\ge 0$ and
$$
|a_{j,k}|\le \frac{q^{j^2+k^2}}{(q;q)_j(q;q)_k(q;q)_\infty^2}.
$$
Moreover, $\mathcal
A\mathcal H=\mathcal H\mathcal A=I$.
\end{lem}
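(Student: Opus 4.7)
The plan is to deduce a pointwise bound on $b_{k,n}$ from the explicit formula \eqref{eq:SW2}, sum in $n$ via a $q$-exponential identity to get the bound on $a_{j,k}$, and then apply Fubini together with the orthogonality of $(P_n)$ to evaluate $(\mathcal{A}\mathcal{H})_{j,l}$ by reducing it to the $x^j$-coefficient in the finite expansion $x^l=\sum_{n=0}^{l}c_{l,n}P_n(x)$, where $c_{l,n}=\int x^l P_n\,d\mu$.

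First I would read off from \eqref{eq:SW2} that
$$|b_{k,n}|=\frac{q^{n/2+1/4+k^2+k/2}\sqrt{(q;q)_n}}{(q;q)_k\,(q;q)_{n-k}},$$
and apply the elementary monotonicity estimates $\sqrt{(q;q)_n}\le 1$ and $(q;q)_{n-k}\ge (q;q)_\infty$. To bound $|a_{j,k}|$ I would assume without loss of generality that $j\le k$, substitute $n=k+m$, and cancel the ratio $(q;q)_n/(q;q)_{n-j}=(q^{n-j+1};q)_j\le 1$; the remaining sum collapses to $\sum_{m\ge 0}q^m/(q;q)_m = 1/(q;q)_\infty$, the $q$-exponential series. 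Collecting factors and observing that $q^{j/2+3k/2+1/2}(q;q)_\infty < 1$ delivers the advertised Gaussian bound $q^{j^2+k^2}/[(q;q)_j(q;q)_k(q;q)_\infty^2]$.

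The same pointwise bound on $|b_{j,n}b_{k,n}|$ shows
$$\sum_{n,k}|b_{j,n}b_{k,n}|\,s_{k+l}\le \sum_k|a_{j,k}|\,q^{-(k+l+1)^2/2}<\infty,$$
since the $q^{k^2}$ decay in the bound on $|a_{j,k}|$ easily dominates the growth $s_{k+l}\sim q^{-k^2/2}$. Fubini therefore applies, and
$$(\mathcal{A}\mathcal{H})_{j,l}=\sum_{n\ge j}b_{j,n}\sum_{k=0}^{n}b_{k,n}s_{k+l}=\sum_{n\ge j}b_{j,n}\int P_n(x)\,x^l\,d\mu(x).$$
The inner integral vanishes for $n>l$ by orthogonality and equals $c_{l,n}$ otherwise, so the sum becomes $\sum_{n=j}^{l}b_{j,n}c_{l,n}$; reading the coefficient of $x^j$ in the finite identity $x^l=\sum_{n=0}^{l}c_{l,n}P_n(x)$ identifies this quantity with $\delta_{j,l}$. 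Symmetry of $\mathcal{A}$ and $\mathcal{H}$ then forces $\mathcal{H}\mathcal{A}=I$ as well.

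The one serious step is the bound on $|a_{j,k}|$: a crude tail estimate gains an unwanted $(1-q)^{-1}$ factor that need not be below $1$, so one must recognise the inner sum as a $q$-exponential in order to extract the clean Gaussian form claimed. Once that bound is in place, the absolute convergence, Fubini interchange, orthogonality identification, and transpose argument for $\mathcal{H}\mathcal{A}=I$ are all routine.
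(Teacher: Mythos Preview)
Your proof is correct and follows essentially the same path as the paper: both extract the explicit formula for $|b_{j,n}b_{k,n}|$, sum in $n$ using the $q$-exponential identity $\sum_{m\ge 0}q^m/(q;q)_m=1/(q;q)_\infty$ to obtain the Gaussian bound, and then use that bound to justify the interchange of summation. The only cosmetic difference is that for $\mathcal A\mathcal H=I$ the paper invokes the already-established Lemma~\ref{thm:ABCinfty} (i.e.\ $\mathcal B(\mathcal B^*\mathcal H)=I$) together with the absolute convergence to reassociate, whereas you inline that argument by computing $(\mathcal A\mathcal H)_{j,l}$ directly via orthogonality and reading off the $x^j$-coefficient of $x^l$; these are the same computation. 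One small notational slip: when you write $\sum_k|a_{j,k}|\,s_{k+l}$ you actually mean the bound $\sum_n|b_{j,n}b_{k,n}|$ in place of $|a_{j,k}|$, but the meaning and the conclusion are unaffected.
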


\begin{proof} We find
$$
|b_{j,n}b_{k,n}|=\frac{(q;q)_n}{(q;q)_j(q;q)_k(q;q)_{n-j}(q;q)_{n-k}}q^{n+j^2+k^2+\tfrac{j+k+1}{2}},
$$
hence for $j\ge k$
\begin{eqnarray*}
|a_{j,k}|&\le& \frac{q^{j^2+k^2+\tfrac{j+k+1}{2}}}{(q;q)_j(q;q)_k}\sum_{n=j}^\infty\frac{(q;q)_n}{(q;q)_{n-j}(q;q)_{n-k}}q^n
\\
&=&\frac{q^{j^2+k^2+\tfrac{j+k+1}{2}}}{(q;q)_j(q;q)_k}\sum_{p=0}^\infty\frac{(q;q)_{j+p}}{(q;q)_p(q;q)_{j-k+p}}q^{j+p}\\
&= &
\frac{q^{j^2+k^2+\tfrac{j+k+1}{2}+j}}{(q;q)_{j-k}(q;q)_k}\sum_{p=0}^\infty\frac{(q^{j+1};q)_p}{(q;q)_p(q^{j-k+1};q)_p}q^p\\
&\le&
\frac{q^{j^2+k^2}}{(q;q)_j(q;q)_k}\sum_{p=0}^\infty\frac{q^p}{(q;q)_p(q;q)_\infty}=
 \frac{q^{j^2+k^2}}{(q;q)_j(q;q)_k(q;q)_\infty^2},
\end{eqnarray*}
where we have used the $q$-binomial theorem
\begin{equation}\label{eq:qbin}
\sum_{n=0}^\infty
\frac{(a;q)_n}{(q;q)_n}z^n=\frac{(az;q)_\infty}{(z;q)_\infty},\quad |z|<1
\end{equation}
with $a=0,z=q$.
By symmetry the estimate holds for all pairs $j,k$. Since
$s_{k+l}=q^{-(k+l+1)^2/2}$ it is clear that the double sum is
absolutely convergent. 

By Lemma~\ref{thm:ABCinfty} we then have 
$$
I=\mathcal B(\mathcal B^*\mathcal H)=(\mathcal B\mathcal B^*)\mathcal H=\mathcal A\mathcal H,
$$ 
and we clearly have $\mathcal H\mathcal A=\mathcal A\mathcal H$. 
\end{proof}

From \eqref{eq:SW2} we get
\begin{equation}\label{eq:SW3}
 P_n(0;q)=(-1)^n\frac{q^{\tfrac{n}{2}+\tfrac14}}{\sqrt{(q;q)_n}},
\end{equation}
hence by \eqref{eq:qbin}
\begin{equation}\label{eq:SW4}
\sum_{n=0}^\infty P_n^2(0;q)=\sum_{n=0}^\infty \frac{q^{n+\tfrac12}}{(q;q)_n}=\frac{\sqrt{q}}{(q;q)_\infty}.
\end{equation}
The matrix $\mathcal K=(\kappa_{j,k})$ defined in \eqref{eq:kappa} is
given by
\begin{equation}\label{eq:kappaspec}
\kappa_{j,k}=(-\sqrt{q})^{j+k}\frac{\sqrt{q}}{\sqrt{(q;q)_j(q;q)_k}}\sum_{p=0}^{\min(j,k)}
\left[\begin{matrix}j\\p\end{matrix}\right]_q\left[\begin{matrix}k\\p\end{matrix}\right]_q
q^{2p^2+p},
\end{equation}
hence
\begin{equation}\label{eq:rhospec}
\rho_0=\sum_{k=0}^\infty
\kappa_{k,k}=\sqrt{q}\sum_{k=0}^\infty\frac{q^k}{(q;q)_k}\sum_{p=0}^k
\left[\begin{matrix}k\\p\end{matrix}\right]_q^2q^{2p^2+p},
\end{equation}
in accordance with \cite{B:C:I}, which also contains other expressions
for $\rho_0$. From \eqref{eq:SW4},\eqref{eq:upperbound} with $z_0=0$ and
\cite[Theorem 1.2]{B:C:I} we get
$$
1/\rho_0\le \lim_{N\to\infty}\la_N<\frac{(q;q)_\infty}{\sqrt{q}}.
$$
From the general theory we know that the Stieltjes-Wigert moment
sequence has an N-extremal solution $\nu_0$, which has the mass
$c=(q;q)_\infty/\sqrt{q}$ (=the reciprocal of the value in
\eqref{eq:SW4}) at 0. It is a discrete measure concentrated at the
zeros of the entire function
$$
D(z)=z\sum_{n=0}^\infty P_n(0;q)P_n(z;q).
$$  
It is also known by a result of Stieltjes, that the measure
$\tilde\mu=\nu_0-c\varepsilon_0$ is determinate,
cf. e.g. \cite[Theorem 7]{B:C}. The moment sequence $(\tilde s_n)$ of
$\tilde\mu$ equals the Stieltjes-Wigert moment sequence except for the
zeroth moment, i.e.
$$
\tilde s_n=\left\{\begin{array}{ll}
(1-(q;q)_\infty)/\sqrt{q} & \mbox{if $n=0$}\\
q^{-(n+1)^2/2} & \mbox{if $n\ge 1$},
\end{array}\right.
$$
and similarly the corresponding Hankel matrices $\mathcal
H$ and $\tilde{\mathcal H}$ differ only at the
entry $(0,0)$.

We shall prove
\begin{thm}\label{thm:SW2} The smallest eigenvalue $\tilde\la_N$
  corresponding to the measure $\tilde\mu$ tends exponentially to zero
  in the sense that there exists a constant $A>0$ such that
$$
\tilde\la_N\le Aq^N.
$$
\end{thm}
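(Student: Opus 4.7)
The plan is to exploit the fact that $\tilde{\mathcal H}_N$ is a rank-one perturbation of $\mathcal H_N$. Setting $c=(q;q)_\infty/\sqrt q$, the relations $\tilde s_0=s_0-c$ and $\tilde s_n=s_n$ for $n\ge1$ yield
$$\tilde{\mathcal H}_N=\mathcal H_N-c\,e_0e_0^*,$$
where $e_0$ denotes the first standard basis vector of $\mathbb C^{N+1}$. My idea is to apply the Sherman--Morrison formula to get a closed form for the $(0,0)$ entry of $\tilde{\mathcal H}_N^{-1}$ and then use the operator-norm bound $1/\tilde\la_N\ge(\tilde{\mathcal H}_N^{-1})_{00}$ (valid because $\tilde{\mathcal H}_N^{-1}$ is positive) to bound $\tilde\la_N$ from above.

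By Theorem~\ref{thm:ABC} combined with \eqref{eq:kernelpol}, $(\mathcal H_N^{-1})_{00}=S_N$, where $S_N:=\sum_{n=0}^N P_n(0;q)^2$. From \eqref{eq:SW4}, $S_N\nearrow S_\infty:=\sqrt q/(q;q)_\infty=1/c$, so $cS_N<1$ for every $N$, and Sherman--Morrison applies. Reading off the top-left entry gives
$$(\tilde{\mathcal H}_N^{-1})_{00}=S_N+\frac{c\,S_N^2}{1-cS_N}=\frac{S_N}{1-cS_N},$$
so
$$\tilde\la_N\le\frac{1-cS_N}{S_N}.$$

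The rest is a concrete estimate. Using \eqref{eq:SW3}, namely $P_n(0;q)^2=q^{n+1/2}/(q;q)_n$, and the trivial inequality $(q;q)_n\ge(q;q)_\infty$, the tail estimate is
$$1-cS_N=c\sum_{n=N+1}^\infty\frac{q^{n+1/2}}{(q;q)_n}\le\frac{c}{(q;q)_\infty}\cdot\frac{q^{N+3/2}}{1-q}=\frac{q^{N+1}}{1-q}.$$
Combined with the trivial lower bound $S_N\ge P_0(0;q)^2=\sqrt q$, this gives
$$\tilde\la_N\le\frac{\sqrt q}{1-q}\,q^N,$$
which is the desired exponential decay with $A=\sqrt q/(1-q)$.

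The main obstacle is essentially structural rather than computational: one must notice that modifying only $s_0$ renders $\tilde{\mathcal H}_N$ a rank-one perturbation of $\mathcal H_N$, at which point the ingredients already developed in Section~1 (namely Theorem~\ref{thm:ABC} and the Stieltjes--Wigert identity \eqref{eq:SW4}) yield the bound immediately. In particular, this route bypasses the route indicated in the introduction, which proceeds via a separate determination of the orthonormal polynomials $\tilde P_n(x;q)$ for $\tilde\mu$ and an application of Lemma~\ref{thm:upperbound}.
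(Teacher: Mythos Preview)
Your proof is correct and takes a genuinely different, more economical route than the paper's.

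Both arguments ultimately bound $\tilde\la_N$ above by $\bigl(\sum_{n=0}^N\tilde P_n(0;q)^2\bigr)^{-1}=\bigl((\tilde{\mathcal H}_N^{-1})_{00}\bigr)^{-1}$: the paper reaches this via Lemma~\ref{thm:upperbound} with $z_0=0$ (and then keeps only the term $\tilde P_N(0;q)^2$), while you obtain the same inequality directly from the operator norm of the positive matrix $\tilde{\mathcal H}_N^{-1}$. The substantive difference lies in how this quantity is evaluated. The paper first derives the full explicit formula for $\tilde P_n(x;q)$ in Theorem~\ref{thm:SW3}, a nontrivial determinant computation, reads off $\tilde P_N(0;q)=\tilde C_N$, and then uses the asymptotic $1-(q^N;q)_\infty\sim q^N/(1-q)$. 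You instead exploit that $\tilde{\mathcal H}_N=\mathcal H_N-c\,e_0e_0^*$ is a rank-one perturbation and apply Sherman--Morrison together with the already available identities $(\mathcal H_N^{-1})_{00}=S_N$ (Theorem~\ref{thm:ABC}) and $S_\infty=1/c$ (formula~\eqref{eq:SW4}), obtaining $(\tilde{\mathcal H}_N^{-1})_{00}=S_N/(1-cS_N)$ with essentially no calculation. Your route bypasses Theorem~\ref{thm:SW3} entirely and even yields a marginally sharper inequality (it uses the full sum $\sum_{n\le N}\tilde P_n(0;q)^2$ rather than a single term); what the paper's approach buys in exchange is the explicit polynomials $\tilde P_n(x;q)$ themselves, which are of independent interest.
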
 

The proof of Theorem~\ref{thm:SW2} depends on the quite remarkable
fact that it is possible to find an explicit formula for the
corresponding orthonormal polynomials which will be denoted $\tilde
P_n(x;q)$. It is a classical fact, cf. \cite[p.3]{Ak}, that the
orthonormal polynomials $(P_n)$ corresponding to a moment sequence
$(s_n)$ are given by the formula
\begin{equation}\label{eq:detpol} 
P_n(x)=\frac{1}{\sqrt{D_{n-1}D_n}}\det\begin{pmatrix} s_0 & s_1
  &\cdots &s_n\\ \vdots &\vdots&\ddots&\vdots\\
  s_{n-1}&s_n&\cdots&s_{2n-1}\\
1 & x&\cdots&x^n
\end{pmatrix},
\end{equation}
where $D_n=\det(\mathcal H_n)$. In this way Wigert calculated the
polynomials $P_n(x;q)$, and we shall follow the same procedure for
$\tilde P_n(x;q)$.
Writing
\begin{equation}\label{eq:swtilde}
\tilde P_n(x;q)=\sum_{k=0}^n \tilde b_{k,n}x^k,
\end{equation}
we have
\begin{thm}\label{thm:SW3} For $0\le k\le n$
\begin{equation}\label{eq:swtilde1}
\tilde b_{k,n}=\tilde C_n (-1)^k\left[\begin{matrix}n\\k\end{matrix}\right]_q
q^{k^2+\tfrac{k}{2}}\left[1-(1-q^k)(q^{n+1};q)_\infty\right],
\end{equation}
where
\begin{equation}\label{eq:swtilde2}
\tilde C_n=\frac{(-1)^n q^{\frac{n}{2}+\frac{1}{4}}}{\sqrt{(q;q)_n}\sqrt{(1-(q^n;q)_\infty)(1-(q^{n+1};q)_\infty)}},
\end{equation}
i.e.
\begin{equation}\label{eq:swtilde3}
\tilde b_{k,n}=b_{k,n}\frac{1-(1-q^k)(q^{n+1};q)_\infty}{\sqrt{(1-(q^n;q)_\infty)(1-(q^{n+1};q)_\infty)}},
\end{equation}
where $b_{k,n}$ denote the coefficients of $P_n(x;q)$. Moreover,
\begin{equation}\label{eq:swtilde4}
\tilde D_n=D_n(1-(q^{n+1};q)_\infty),
\end{equation}
where $D_n=\det\mathcal H_n,\;\tilde D_n=\det\tilde{\mathcal H}_n$.
\end{thm}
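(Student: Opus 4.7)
The plan is to derive both assertions from the observation that $\tilde{\mathcal H}_n = \mathcal H_n - c E_{0,0}$, where $c=(q;q)_\infty/\sqrt q$ and $E_{0,0}$ is the matrix with a single $1$ in the upper-left corner.

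For \eqref{eq:swtilde4} I would expand $\det \tilde{\mathcal H}_n$ by multilinearity in the first row to get
$$\tilde D_n = D_n - c\det M_{0,0},$$
where $M_{0,0}$ denotes the minor of $\mathcal H_n$ obtained by deleting the first row and column. By Cramer's rule $\det M_{0,0}/D_n$ equals $(\mathcal H_n^{-1})_{0,0}$, which by Theorem~\ref{thm:ABC} is $a_{0,0}^{(n)} = K_n(0,0)$. Using \eqref{eq:SW3} one has $c K_n(0,0) = (q;q)_\infty \sum_{k=0}^n q^k/(q;q)_k$, and the elementary identity $\sum_{k=0}^n q^k/(q;q)_k = 1/(q;q)_n$ (immediate by induction from $(q;q)_{k+1} = (q;q)_k(1-q^{k+1})$) gives $cK_n(0,0) = (q;q)_\infty/(q;q)_n = (q^{n+1};q)_\infty$, yielding $\tilde D_n = D_n(1-(q^{n+1};q)_\infty)$.

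For \eqref{eq:swtilde1}--\eqref{eq:swtilde3} I would characterize $\tilde P_n$ via its orthogonality properties rather than expand the big determinant directly. Since $\tilde\mu$ and the Stieltjes--Wigert functional differ only at the constant term, the orthogonality conditions amount to $\int \tilde P_n\, x^j\,d\mu = 0$ for $j = 1,\ldots,n-1$ together with $\int \tilde P_n\,d\mu = c\,\tilde P_n(0)$. The reproducing property of $K_{n-1}(0,x)$ shows that it also is orthogonal in $L^2(\mu)$ to $x,\ldots,x^{n-1}$, so together with $P_n(x;q)$ it spans the two-dimensional solution space, and
$$\tilde P_n(x) = A\, P_n(x;q) + B\, K_{n-1}(0,x).$$
The two remaining conditions (the constant-term equation and $\int \tilde P_n^2\,d\tilde\mu = 1$) give a small system for $A,B$. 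After using $cK_{n-1}(0,0) = (q^n;q)_\infty$ together with $cP_n(0;q)^2 = q^n(q^{n+1};q)_\infty$, the quadratic form collapses to $\int \tilde P_n^2\,d\tilde\mu = A^2(1-(q^{n+1};q)_\infty)/(1-(q^n;q)_\infty)$, so
$$A = \sqrt{\frac{1-(q^n;q)_\infty}{1-(q^{n+1};q)_\infty}}, \qquad B = \frac{cAP_n(0;q)}{1-(q^n;q)_\infty}.$$

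The final step reads off $\tilde b_{k,n}$ from this decomposition. For $k = n$ only the first summand contributes, and the identity $1-(1-q^n)(q^{n+1};q)_\infty = 1-(q^n;q)_\infty$ yields \eqref{eq:swtilde3} at once. For $k < n$ the coefficient is $Ab_{k,n} + B\sum_{j=k}^{n-1} P_j(0;q)\,b_{k,j}$, and I expect this recombination to be the main obstacle: one must show the sum evaluates in closed form so the two contributions merge into the stated ratio. Substituting the explicit expressions from \eqref{eq:SW2} and \eqref{eq:SW3}, the summand separates as a $(j,k)$-independent prefactor proportional to $b_{k,n}$ times $q^i/(q;q)_i$ with $i = j-k$, and the same identity $\sum_{i=0}^{m}q^i/(q;q)_i = 1/(q;q)_m$ (with $m = n-1-k$) collapses the sum. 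A short manipulation using $(q;q)_n(q^{n+1};q)_\infty = (q;q)_\infty$ then combines $Ab_{k,n}$ with $B\cdot(\text{sum})$ into $b_{k,n}[1-(1-q^k)(q^{n+1};q)_\infty]/\sqrt{(1-(q^n;q)_\infty)(1-(q^{n+1};q)_\infty)}$, as claimed.
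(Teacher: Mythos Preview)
Your argument is correct and takes a genuinely different route from the paper's. The paper proceeds by brute-force determinantal computation: it evaluates $D_n$ and the cofactors $A_{n+1,p+1}$ of $\mathcal H_n$ explicitly via Vandermonde determinants, then repeats this for $\tilde{\mathcal H}_n$ by expanding along the first column, obtaining \eqref{eq:swtilde4} and the ratio $\tilde A_{n+1,p+1}/\tilde D_n$ directly; finally it reads off $\tilde b_{k,n}$ from the determinantal formula \eqref{eq:detpol}. Your approach instead exploits the rank-one structure $\tilde{\mathcal H}_n=\mathcal H_n-cE_{0,0}$ together with Theorem~\ref{thm:ABC}, so that the $(0,0)$ minor is identified with $K_n(0,0)$ and the telescoping identity $\sum_{i=0}^m q^i/(q;q)_i=1/(q;q)_m$ gives \eqref{eq:swtilde4} in one line. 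For the polynomials you bypass determinants entirely by observing that $\tilde P_n$ must lie in $\operatorname{span}\{P_n,K_{n-1}(0,\cdot)\}$, and the same telescoping identity collapses $\sum_{j=k}^{n-1}P_j(0)b_{k,j}$; the short calculation with $(q^n;q)_\infty=(1-q^n)(q^{n+1};q)_\infty$ then yields \eqref{eq:swtilde3}. This is more conceptual and avoids all the Vandermonde bookkeeping; the price is that you do not get the explicit intermediate formulas for $D_n$ and the individual cofactors, though those are not part of the statement. One tiny caveat: at $n=0$ your two-dimensional ansatz degenerates (there are no orthogonality conditions and $K_{-1}\equiv 0$), but the formula then reduces to $\tilde P_0=1/\sqrt{\tilde s_0}$, which matches \eqref{eq:swtilde3} since $(q^0;q)_\infty=0$.
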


\begin{proof}
We first recall the Vandermonde determinant
\begin{equation}\label{eq:van}
V_n(x_1,\ldots,x_n)=\det\begin{pmatrix}
  1&1&\cdots&1\\x_1&x_2&\cdots&x_n\\\vdots&\vdots&\ddots&\vdots\\
x_1^{n-1}&x_2^{n-1}&\cdots&x_n^{n-1}\end{pmatrix}=\prod_{1\le i<j\le n}(x_j-x_i).
\end{equation}
For an $n\times n$-matrix $(a_{j,k}),\;j,k=1\ldots n$ with non-zero
elements in the first row and column we have
$$
\det(a_{j,k})=\left(\prod_{j=1}^n a_{j,1}\right)\left(\prod_{k=1}^n
  a_{1,k}\right)\det(\frac{a_{j,k}}{a_{j,1}a_{1,k}}),
$$
and if $a_{j,k}=q^{-(j+k-1)^2/2},\,j=k=1,\ldots,n+1$, where $0<q<1$, we get in particular
\begin{equation}\label{eq:Dn1}
D_n=\det(q^{-(j+k-1)^2/2})=\left(\prod_{j=1}^{n+1}q^{-j^2/2}\right)^2\det(q^{-(j-1)(k-1)+1/2}),
\end{equation}
hence using $S_n=\sum_{j=1}^n j^2=n(n+1)(2n+1)/6$
\begin{equation}\label{eq:Dn2}
D_n=q^{-S_{n+1}+(n+1)/2}V_{n+1}(1,q^{-1},\ldots,q^{-n}).
\end{equation}
By \eqref{eq:van} we get
$$
V_{n+1}(1,q^{-1},\ldots,q^{-n})=\prod_{i=0}^n\prod_{j=i+1}^n\frac{1}{q^j}(1-q^{j-i})=
\prod_{i=0}^n q^{-(n-i)(n+i+1)/2}(q;q)_{n-i},
$$
and after some reduction
\begin{equation}\label{eq:Dn3}
V_{n+1}(1,q^{-1},\ldots,q^{-n})=q^{-S_n}\prod_{j=1}^n(q;q)_j.
\end{equation}
We denote by $A_{r+1,p+1}$ respectively $\tilde{A}_{r+1,p+1}$ the cofactor of the entry $(r+1,p+1)$ of the
Hankel matrix 
$\mathcal H_n=(q^{-(j+k-1)^2/2})$
 respectively $\tilde{\mathcal H}_n$,  where $r,p=0,1,\ldots,n$.
When $r=0$ or $p=0$ we clearly have $A_{r+1,p+1}=\tilde{A}_{r+1,p+1}$.
 For
 $0<p<n$ we get
$$
A_{n+1,p+1}=(-1)^{n-p}\det\left(q^{-(j+k-1)^2/2}\;|\;\stackrel{\mbox{\scriptsize{$j=1,\ldots,n$}}}{\mbox{
\scriptsize{$k=1,\ldots,n+1;k\neq
    p+1$}}}\right)
$$
$$
= (-1)^{n-p}\prod_{j=1}^n q^{-j^2/2}\prod_{\stackrel{k=1}{k\neq
  p+1}}^{n+1}q^{-k^2/2}
\det\left(q^{-(j-1)(k-1)+1/2}\;|\;\stackrel{\mbox{\scriptsize{$j=1,\ldots,n$}}}{\mbox{
\scriptsize{$k=1,\ldots,n+1;k\neq
    p+1$}}}\right)
$$
$$
=(-1)^{n-p}q^{-S_{n+1}+((n+1)^2+(p+1)^2+n)/2}V_n(1,q^{-1},\ldots,q^{-(p-1)},q^{-(p+1)},\ldots,q^{-n}).
$$
However,
\begin{eqnarray*}\lefteqn{V_{n+1}(1,q^{-1},\ldots,q^{-n})}\\
&=& V_n(1,q^{-1},\ldots,q^{-(p-1)},q^{-(p+1)},\ldots,q^{-n})\prod_{j=0}^{p-1}(q^{-p}-q^{-j})
\prod_{j=p+1}^{n}(q^{-j}-q^{-p})\\
&=& V_n(1,q^{-1},\ldots,q^{-(p-1)},q^{-(p+1)},\ldots,q^{-n})(q;q)_p(q;q)_{n-p}q^{-(n^2+p^2+n-p)/2},
\end{eqnarray*}
so we finally get
\begin{equation}\label{eq:Dn4}
A_{n+1,p+1}/D_n=(-1)^nq^{(n+1)(n+1/2)}\frac{(-1)^pq^{p(p+1/2)}}{(q;q)_p(q;q)_{n-p}},\quad
0<p<n.
\end{equation}
It can be verified that this formula also holds for $p=0$ and $p=n$.

Using \eqref{eq:detpol} it is now easy to verify formula \eqref{eq:SW2} for
the Stieltjes-Wigert polynomials $P_n(x;q)$.

Expanding after the first column we get
$$
\tilde D_n=D_n-cA_{1,1},\quad c=(q;q)_\infty/\sqrt{q},
$$
and a calculation as above leads to
$$
A_{1,1}=q^{-S_{n+2}+5+9(n/2)}V_n(1,q^{-1},\ldots,q^{-(n-1)}),
$$
which gives \eqref{eq:swtilde4}. Moreover, for $0<p\le n$ we find
$$
\tilde
A_{n+1,p+1}=A_{n+1,p+1}-c(-1)^{n-p}\det\left(q^{-(j+k+1)^2}\;|\;\stackrel{\mbox{\scriptsize{$j=1,\ldots,n-1$}}}
{\mbox{\scriptsize{$k=1,\ldots,n;k\neq p$}}}\right),
$$
and the last determinant can be calculated to be
$$
\frac{D_{n-1}}{(q;q)_{n-p}(q;q)_{p-1}}q^{-n^2-(n-1)/2+p(p+1/2)}.
$$
This leads to
\begin{equation}\label{eq:Dn5}
\frac{\tilde A_{n+1,p+1}}{\tilde
  D_n}=\frac{A_{n+1,p+1}}{D_n}\frac{1-(1-q^p)(q^{n+1};q)_\infty}
{1-(q^{n+1};q)_\infty}.
\end{equation}
It can be verified that this formula also holds for $p=0$ because of
\eqref{eq:swtilde4}, and it is now easy to establish \eqref{eq:swtilde3}.
\end{proof}

{\bf Proof of Theorem~\ref{thm:SW2}} By Lemma~\ref{thm:upperbound} we
get
$$
\tilde\la_N\le
(\tilde{P}_N(0;q))^{-2}=\frac{(q;q)_N(1-(q^{N+1};q)_\infty)(1-(q^N;q)_\infty)}{q^{N+1/2}}.
$$
From the power series expansion of the entire function $(z;q)_\infty$
we have
$$
1-(z;q)_\infty\sim \frac{z}{1-q},\quad z\to 0,
$$
hence
\begin{equation}\label{eq:ass1}
1-(q^N;q)_\infty\sim \frac{q^N}{1-q},\quad N\to \infty,
\end{equation}
and therefore
$$
(\tilde{P}_N(0;q))^{-2}\sim
\frac{(q;q)_\infty}{(1-q)^2}q^{N+1/2},\quad N\to\infty,
$$
which proves the statement of the theorem.

 \begin{rem} {\rm The measure $\tilde{\mu}$ is determinate of index 0,
     cf. \cite{B:D0}, so by Corollary 2.1 in \cite{B:D} we know that
     the next smallest eigenvalue $\tilde{\la}_{N,1}$ of
     $\tilde{\mathcal H}_N$ is bounded below.}
\end{rem} 

\vspace{0.8cm}

\noindent
Christian Berg\\
Department of Mathematics, University of Copenhagen,
Universitetsparken 5, DK-2100, Denmark\\
e-mail: {\tt{berg@math.ku.dk}}

\vspace{0.4cm}
\noindent
Ryszard Szwarc\\
Institute of Mathematics,
University of Wroc{\l}aw, pl.\ Grunwaldzki 2/4, 50-384 Wroc{\l}aw, Poland 
\newline  and
\newline \noindent Institute of Mathematics and Computer Science, 
University of Opole, ul. Oleska 48,
45-052 Opole, Poland\\
e-mail: {\tt{szwarc2@gmail.com}}


\begin{thebibliography}{120}

\bibitem{Ak} N.~I.~Akhiezer, {\it The Classical Moment Problem and Some Related
 Questions in Analysis}. English translation, Oliver and Boyd, Edinburgh,
 1965.

\bibitem{Bec} B.~Beckermann, {\it The condition number of real
    Vandermonde, Krylov and positive definite Hankel matrices},
  Numer. Math. {\bf 85} (2000), 553--577.

\bibitem{Be2} C. Berg, {\it Fibonacci numbers and orthogonal
 polynomials}. ArXiv:math.NT/0609283.

\bibitem{B:C:I} C.~Berg, Y.~Chen, M.~E.~H.~Ismail, {\it Small eigenvalues of
    large Hankel matrices: the indeterminate case,} 
Math. Scand. {\bf 91} (2002), 67--81.

\bibitem{B:C}  C.~Berg, J.~P.~R.~Christensen, {\it Density questions
    in the classical theory of moments}, Ann. Inst. Fourier, Grenoble
  {\bf 31},3 (1981), 99--114. 

\bibitem{B:D0}  C.~Berg, A.~J.~Dur{\'a}n, {\it The index of
determinacy for measures and the $\ell ^2$-norm of orthogonal polynomials},
Trans. Amer. Math. Soc. {\bf 347} (1995), 2795-2811.

\bibitem{B:D} C.~Berg, A.~J.~Dur{\'a}n, {\it Orthogonal polynomials
    and analytic functions associated to positive definite matrices},
  J. Math. Anal. Appl. {\bf 315} (2006), 54--67.

\bibitem{Ch:La} Y.~Chen, N.~D.~Lawrence, {\it Small eigenvalues of
 large Hankel matrices}, J. Phys. A {\bf 32} (1999), 7305-7315.

\bibitem{Ch:Lu} Y.~Chen, D.~S.~Lubinsky, {\it Smallest eigenvalues of
    Hankel matrices for exponential weights},
  J. Math. Anal. Appl. {\bf 293} (2004), 476--495. 

\bibitem{ch} T.~Chihara, {\it Chain sequences and orthogonal polynomials,} Trans. Amer. Math.
Soc. {\bf 104} (1962), 1--16. 

\bibitem{Chr} J.~S.~Christiansen, {\it The moment problem associated
    with the Stieltjes-Wigert polynomials}, J. Math. Anal. Appl. {\bf 277} (2003), 218--245. 

\bibitem{Co} A.~R.~Collar, {\it On the Reciprocation of Certain
    Matrices}, Proc. Roy. Soc. Edinburgh {\bf 59} (1939), 195--206.

\bibitem{G:R} G.~Gasper, M.~Rahman, {\it Basic hypergeometric
    series}.
Cambridge University Press, Cambridge 1990, second edition 2004.

\bibitem{Lu} D.~S.~Lubinsky, {\it Condition numbers of Hankel matrices
    for exponential weights}, J. Math. Anal. Appl. {\bf 314} (2006), 266--285.

\bibitem{S:T} J.~Shohat and J.~D.~Tamarkin, {\it The Problem of Moments.}
Revised edition, American Mathematical Society, Providence, 1950.

\bibitem{Simon} B.~Simon,  {\it The classical moment problem as a
 self-adjoint finite difference operator}.  Adv. Math. {\bf  137}(1998),
 82--203. 

\bibitem{Si} B.~Simon, {\it The Christoffel-Darboux kernel}. In "Perspectives in PDE, Harmonic Analysis and Applications," a volume in honor of V.G. Maz'ya's 70th birthday, Proceedings of Symposia in Pure Mathematics 79 (2008), 295-335.   


\bibitem{Sz} G.~Szeg\H{o}, {\it On some Hermitian forms associated with two
given curves of the complex plane}, Trans. Amer. Math. Soc. {\bf 40}
(1936), 450--461. In: Collected papers (volume 2),
666--678. Birkha\"user, Boston, Basel, Stuttgart, 1982.


\bibitem{Sz1} G.~Szeg{\H o}, Orthogonal Polynomials, 4th ed.,
  Colloquium  Publications, vol. 23, Amer. Math. Soc., Rhode Island, 1975.

 \bibitem{lb} R.~Szwarc, {\it A lower bound for orthogonal polynomials with an application to polynomial hypergroups,}
 J. Approx. Theory {\bf 81} (1995), 145--150.

\bibitem{rs} R.~Szwarc,
{\it Absolute continuity of certain unbounded Jacobi matrices,} Advanced Problems in Constr. Approx., (Eds.) M. D. Buhmann and D. H. Mache, International Series of Numerical Mathematics Vol. 142 (2003), 255--262.

\bibitem{W:W} H.~Widom and H.~S.~Wilf,
{\it Small eigenvalues of large Hankel matrices},
Proc. Amer. Math. Soc. {\bf 17} (1966),  338--344.

\bibitem{Wig} S.~Wigert, {\it Sur les polynomes orthogonaux et
    l'approximation des fonctions continues}, Arkiv f\"or Matematik,
  Astronomi och Fysik {\bf 17} (1923), no. 18, 15pp.

\bibitem{Wi} H.S. Wilf, Finite sections of some classical inequalities.
Springer, Berlin, Heidelberg, New York 1970.


\end{thebibliography}
\end{document}